\def\Item$#1${\item $\displaystyle#1$
   \hfill\refstepcounter{equation}(\theequation)}
\newcommand{\@giventhatstar}[2]{\left(#1\,\middle|\,#2\right)}
\newcommand{\@giventhatnostar}[3][]{#1(#2\,#1|\,#3#1)}
\newcommand{\giventhat}{\@ifstar\@giventhatstar\@giventhatnostar}
\newcommand{\pp}{\mathbb{P}}
\newcommand{\ee}{\mathbb{E}}
\newcommand{\intg}{\mathbb{Z}}
\newcommand{\reals}{\mathbb{R}}
\newcommand{\mb}{\mathbf}
\newcommand{\lt}{\left}
\newcommand{\rt}{\right}
\newcommand{\wt}{\widetilde}
\newcommand{\mx}{\mb{x}}
\newcommand{\my}{\mb{y}}
\newcommand{\mD}{\mb{D}}
\newcommand{\mX}{\mb{X}}
\newcommand{\mY}{\mb{Y}}
\newcommand{\mZ}{\mb{Z}}
\newcommand{\mU}{\mb{U}}
\newcommand{\mL}{\mb{L}}
\newcommand{\scG}{\mathscr{G}}
\providecommand{\keywords}[1]
{
  \small	
  \textbf{\textit{Keywords---}} #1
}
\newtheorem{theorem}{Theorem}
\newtheorem*{example}{Example}
\newtheorem{lemma}{Lemma}
\newtheorem*{algorithm-non}{Algorithm}
\title{Perfect Sampling for Gibbs Point Processes Using \\Partial Rejection Sampling}
\author{Sarat B. Moka}
\author{Dirk P. Kroese}
\affil{\small School of Mathematics and Physics\\
The University of Queensland, Brisbane}
\date{}
\begin{document}

\maketitle

\begin{abstract}
We present a perfect sampling algorithm for Gibbs point processes, based on the partial rejection sampling of \citet{GJL17}. Our particular focus is on pairwise interaction processes, penetrable spheres mixture models and area-interaction processes, with a finite interaction range. For an interaction range $2r$ of the target process, the proposed algorithm can generate a perfect sample with $O(\log(1/r))$ expected running time complexity, provided that the intensity of the points is not too high.
\end{abstract}

\keywords{Perfect sampling, Partial-rejection sampling, Hard-core process, Strauss process, Pairwise interaction process, Area-interaction process, Penetrable spheres mixture model}

\section{Introduction}
        Various phenomena in physics, chemistry and biology are modelled by Gibbs point processes. A Gibbs point process --- or simply, Gibbs process --- is a  spatial point process whose distribution is absolutely continuous with respect to that of a Poisson point process (PPP). Pairwise interaction point (PIP) processes and  penetrable spheres mixture (PSM) models are two widely studied examples of Gibbs processes; see, for e.g., \cite{MW04, MH16, KM00, BN12, BT05}. The PIP family includes hard-core processes and Strauss processes. \\
        
        Perfect sampling for Gibbs processes is an active area of research. A sampling algorithm  for a given distribution is called {\em perfect} if it generates an exact sample from this distribution within a finite time. We refer to \citet{KW98, Fill98, KM00, GNL00, FFG02, MH12, MJM17, Gj18} for some of the existing perfect sampling algorithms for Gibbs processes. The methods in \citet{MJM17} and \citet{Gj18} generate perfect samples of hard-core processes. The other methods in the references above are applicable to more general Gibbs processes, including PIP processes and PSM models. Among these methods, the dominated coupling from the past (dCFTP) methods by \citet{KW98, KM00} and \citet{MH12} are shown to be efficient when the density of the points is small; see, for example, \citet{MH16}. As we show in this paper, for an interaction range $2r$ of the target Gibbs process and dimension $d$ of the points, the expected running time complexity of any dCFTP method is at least of order $\frac{1}{r^d} \log\lt(\frac{1}{r} \rt)$ even when the density of the reference PPP is very small. However, dCFTP algorithms are sequential and thus they do not take advantage of parallel computing. In this paper, we propose a method for generating perfect samples of PIP processes and PSM models using {\em partial rejection sampling} (PRS) method of \citet{GJL17} and show how one can obtain, using parallel computing, an expected running time complexity of $O(\log(1/r))$, provided that the density of the reference PPP is not too high. \\

The PRS method provides a general methodology for generating perfect samples from a product distribution, conditioned on none of a number of {\em bad events} occurring. Such problems are in general {\bf NP}-hard; see, for e.g., \citet{BGGGS16} and \citet{GJL17}. However, for certain types of parametric product distributions, the PRS algorithm is efficient and terminates within $O(\log n)$ iterations on average, where $n$ is the number of bad events. An additional feature of the PRS algorithm is that, unlike the dCFTP methods, it is distributive, in the sense that it allows parallel computation within each iteration. As a consequence, the PRS algorithm can  be implemented with $O(\log n)$ expected running time complexity. By exploiting the distributive property of the PRS, we use the PRS algorithm for generating perfect samples of Gibbs processes on a Euclidean subset $S$. In particular, a brief description of our contributions is as follows:
\begin{itemize}
\item We partition $S$ into a finite number of cells and define a product measure by ignoring the {\em cross} interactions between the cells. Further by defining appropriate bad events that depend on the cross interactions, we express the distribution of the target Gibbs process as the product distribution conditioned on none of the bad events occuring. This construction allows the generation of perfect samples using PRS.

\item To analyze the running time complexity of the proposed algorithm, we take $S = [0,1]^d$ and the intensity of the reference PPP as ${\kappa = \frac{\kappa_0}{\mathsf{v}_d r^d}}$ for some constant $\kappa_0$, where $2r$ is the interaction range of the Gibbs process and $\mathsf{v}_d $ is the volume of a $d$-dimensional sphere of unit radius. We consider the regime where $\kappa_0$ is fixed and $r$ goes to zero, and prove that if the volume of each cell is of order $r^d$, there exists a constant $\bar \kappa~>~0$ such that for all $\kappa_0 \leq \bar \kappa$, the expected running time complexity of the algorithm is $O\lt(\log \frac{1}{r}\rt)$ as a function of $r$. 
\item  To illustrate the application of the proposed algorithm, we consider a $d$-dimensional cubic grid partitioning of $S = [0,1]^d$ and conduct extensive simulations to estimate the expected number of iterations of the algorithm for different values of $\kappa_0$ and the interaction range $2r$. 
\end{itemize}
To the best of our knowledge, this is the first method for PIP processes and PSM models with $O\lt(\log \frac{1}{r}\rt)$ running time complexity. The method of \citet{Gj18} is a continuous version of the PRS algorithm. It has the same order of expected number of iterations as our algorithm, but  restricted to hard-core processes. One of our simulation results provides a comparison between the expected number of iterations of the proposed method and the method of \citet{Gj18} for a hard-core process.\\

The remaining paper is organized as follows:  In Section~\ref{sec:notation}, we introduce some notations that are useful throughout the paper. Section~\ref{sec:spp} provides definitions of the spatial point processes of interest.
In Section~\ref{sec:PRS}, the PRS method is presented and illustrated its application with an example. In Section~\ref{sec:PRS_GPP}, we propose our new perfect sampling method for Gibbs processes using PRS, and in Section~\ref{sec:RTC}, we analyze its running time complexity. Simulation results for Strauss process and PSM model are presented in Section~\ref{sec:sim}. The paper is concluded in Section~\ref{sec:con}.

\section{Notation}
\label{sec:notation}
First, some notation. $\reals_+$ is the set of non-negative real numbers and $\intg_+$ is the set of non-negative integers. $\reals^d$ denotes the $d$-dimensional Euclidean space with the corresponding Euclidean norm  $\| \cdot \|$. The distance between any two sets $C, D \subseteq \reals^d$ is defined by $$\mathsf{Dist}(C, D) = \inf\lt\{ \|x - y \| : x \in C \text{ and } y \in D\rt\},$$ with
$\mathsf{Dist}(\varnothing, C) = \infty$, where $\varnothing$ denotes the empty set. 
We use $\mathrm{e}$ to denote $\exp(1)$.
For any $x \in \reals_+$, $\lfloor x \rfloor$ is the largest $n \in \intg_+$ such that $n \leq x$. 
For any two probability measures $\mu_1$ and $\mu_2$ that are defined on the same measurable space, we write $\mu_1 \ll \mu_2$ to denote that $\mu_1$ is absolutely continuous with respect $\mu_2$.
We write $X \sim \mu_1$ to indicate that the distribution of a random object $X$ is $\mu_1$. 
The distributions of a Bernoulli random variable with success probability $p$, a uniform random variable over $(0,1)$ and a Poisson random variable with mean $\lambda$ are denoted, respectively, by $\mathsf{Bern}(p)$, $\mathsf{Unif}(0,1)$ and $\mathsf{Poi}(\lambda)$. For any event $A$, $\mathbb{I}(A)$ is equal to $1$ if the event holds, otherwise it is equal to $0$.


\section{Spatial Point Processes}
\label{sec:spp}
Consider a finite measure $\nu$ on a Euclidean subset ${S \subseteq \reals^d}$ that is absolutely continuous with respect to the Lebesgue measure.
Let $\scG$ be the set of all finite sets on $S$, defined by
\[
\scG := \Big\{\mx = \{x_1, x_2, \dots, x_n\} : n \in \intg_+ \text{ and } x_i \in S, \forall i \leq n \Big\}, 
\]
where $n = 0$ corresponds to the empty set. We assume that the elements of $\scG$ are {\it simple}, that is, they do not have multi-points.
For any ${\mx \in \scG}$, ${\big|\mx_A\big|}$ denotes the cardinality of ${\mx_A := \mx\cap A}$.
A {\it point process} is a random element $\mX : \Omega \to \scG$. \\

\noindent
{\bf Poisson point process (PPP):}
A point process ${\mX}$ is called {\it Poisson} on $S$ with intensity measure $\nu$ if 
it satisfies the following two properties:
\begin{enumerate}
	\item[(i)] $|\mX_A| \sim \mathsf{Poi}(\nu(A))$ for any measurable $A \subseteq  S$ and
	\item[(ii)] $|\mX_{A_1}|,  \dots, |\mX_{A_n}|$ are independent if $A_1,  \dots, A_n$ are measurable disjoint subsets of $S$.
\end{enumerate}
A PPP is called $\kappa$-{\em homogeneous} if the intensity ${\nu(\mathrm{d}x) = \kappa\, \mathrm{d}x}$ for some constant $\kappa > 0$. \\

 In several scenarios, it is important to associate an independent mark with each point in a PPP to characterize the shape or type of the object at that point. A {\it marked} PPP on $S$ is a PPP such that each point has a (random) mark independent of all other points. The mark associated with a point can {\em depend} on the point. For example, a mark at a point denotes the radius of a circle centered at that point. A typical realization of a marked PPP with $n$ points is of the form $\mx = \{ (z_1, m_1), (z_2, m_2), \dots, (z_n, m_n)\}$, where $\{z_1, z_2, \dots, z_n\} \in \scG$ and $m_i$ is the mark associated with $z_i$ for $i = 1, \dots, n$. For such a marked configuration, we define $\mx_A = \{(z_i, m_i) \in \mx : z_i \in A \}$ for any $A \subseteq S$. If the mark  space is $M$, then it is easy to see that the marked PPP is a PPP on $S\times M$.\\
 
It is common approach in statistical physics to wrap $S$ on a torus (that is, $S$ has periodic boundary) when large interacting particle systems are considered. In that case, throughout the paper, the Euclidean distance is replaced by geodesic distance.\\

\noindent
{\bf Gibbs point process:} 
Suppose that $\rho$ is the distribution of a (marked) PPP. A point process with distribution ${\mu \ll \rho}$ is called a {\em Gibbs point process} (or simply, Gibbs process) if the associated Radon-Nikodym derivative is of the form
\begin{equation}
 \label{eqn:abspos}
 \frac{\mathrm{d}\mu}{\mathrm{d}\rho} \lt(\mx \rt) = \frac{\exp\lt( - \mathcal{U}(\mx)\rt)}{Z},
\end{equation}
for every possible realization $\mx$ under $\rho$, where $\mathcal{U}$ is a non-negative real-valued potential function that is {\it non-degenerate} (i.e., ${\mathcal{U}(\{x\}) < \infty}$), and {\it hereditary} (i.e., ${\mathcal{U}(\mx) \leq \mathcal{U}(\mx')}$ for all $\mx \subseteq \mx'$). The normalizing constant $Z$ is equal to $\ee_{\rho} \lt[\exp\lt( - \mathcal{U}(\mX)\rt) \rt]$.\\

\noindent
{\bf Pairwise interaction point (PIP) processes:} A pairwise interaction point (PIP) process is a Gibbs point process for which the potential function is of the form 
\begin{align}
\label{eqn:Potn_pw}
\mathcal{U}(\mx ) = \sum_{\{x, y\} \subseteq \mx} f(x,y),\,\, \mx \in \scG,
\end{align}
where $f :\reals^d \times \reals^d \rightarrow \reals_+ \cup \{\infty\}$ is called  the pairwise interaction function; see, for e.g.,  \citet{CSKM13}. We say that a PIP has {\em finite}  range interaction if there exists $a < \infty$ such that $f(x, y) = 0$ for all $x, y \in S$ for which $\|x - y\| \geq a$; that is, the interaction between any two points is zero if they are separated by a distance of at least $a$. The smallest such $t$ is called the {\em interaction range} of the PIP. Some important PIP processes are considered below.\\

{\it Hard-core process:} A hard-core process with hard-core distance $2r > 0$ (that is, the hard-core radius is $r$) has
\[
f(x,y) = \begin{cases}
        \infty, &\text{ if }\, \|x - y\| < 2r,\\
        0,      &\text{ otherwise}.
        \end{cases} 
\]
 In a hard-core process no two points are within a distance of $2r$. Note that the interaction range here is $2r$. One generalization of the hard-core process is {\it hard-sphere} model with random radii, where the centers of spheres with independent and identically distributed ($i.i.d.$) radii constitute a PPP on $S$ conditioned on the event that no two spheres overlap.\\

{\it Strauss process:} Another well-studied PIP process is the {\it Strauss} process with parameters $\gamma \in [0,1]$ and $r > 0$. Here the interaction function is defined by
\[
f(x,y) = \begin{cases}
        -\log \gamma, &\text{ if }\, \|x - y\| < 2r,\\
        0,      &\text{ otherwise}.
        \end{cases} 
\]
The interaction range of this PIP process is $r$. This process becomes a hard-core process if $\gamma = 0$ with the convention that $0^0 = 1$.\\

{\it Strauss-hard core process:} This PIP process is a hybrid of the Strauss and hard-core processes, and has interaction function
\[
f(x,y) = \begin{cases}
        \infty, &\text{ if }\, \|x - y\| < a_1,\\
        -\log\gamma, &\text{ if }\, a_1 \leq \|x - y\| < a_2,\\
        0,      &\text{ otherwise},
        \end{cases} 
\]
for some $\gamma \in [0,1]$ and $0 < a_1 < a_2$. Here $t_1$ is called hard-core distance. Clearly, the interaction range for this process is $t_2$. \\

\noindent
{\bf Penetrable spheres mixture (PSM) model:} This model was introduced by \citet{WR70} to study liquid-vapor phase transitions.
Let $\rho $ is the distribution of $\kappa$-homogeneous marked PPP, where each point is independently marked either as type-1 (with probability $\kappa_1/(\kappa_1 + \kappa_2)$) or as type-2 (with probability $\kappa_1/(\kappa_1 + \kappa_2)$), for some constants $\kappa_1, \kappa_2 \geq 0$. A realization of a PSM model can be viewed as a realization of  $\mX \sim \rho$ conditioned on the event that no two points from different types are within a distance $2r$ from each other; that is, the corresponding potential function is given by \eqref{eqn:Potn_pw} with 
\[
f(x,y) = \begin{cases}
        \infty, &\text{ if }\, \|x - y\| < 2r \text{ and } x, y \text{ have different marks},\\
        0,      &\text{ otherwise}.
        \end{cases} 
\]

\noindent
{\bf Area-interaction process:} This process was first studied by  \citet{BV95} (see, also, \citet{KM00}, \citet{FFG02} and \citet{MJ01}). 
For any $A \subseteq \reals^d$, let $\mathsf{Vol}(A)$ be the volume of $A$ and $\mathsf{Ball}(x, a)$ be the $d$-dimensional sphere centered at $x$ with radius~$a$. The distribution of an area-interaction process on $S$ is absolutely continuous with respect to that of a $\lambda$-homogeneous PPP for some $\lambda > 0$, with the potential function given by 
\begin{align}
\label{eqn:pot_aip}
\mathcal{U}(\mx) = \beta\,\mathsf{Vol}\lt(\mathlarger{\cup}_{x \in \mx} \mathsf{Ball}(x, 2r)\rt),\, \, \mx \in \scG,
\end{align}
where the constant $\beta > 0$ is called {\em inverse temperature}; see Figure~\ref{fig:AIP} (a). The definition of area-interaction process given in \citet{BV95} is more general, as it allows $\beta < 0$. However, in this paper, we focus only on the case $\beta > 0$.\\

There is an interesting connection between area-interaction processes and PSM models. To see this, instead of \eqref{eqn:pot_aip}, if we suppose that the potential function is 
\begin{align}
\label{eqn:pot_aip_alt}
\mathcal{U}(\mx) = \beta\,\mathsf{Vol}\lt(S \mathlarger{\cap} \lt(\mathlarger{\cup}_{x \in \mx} \mathsf{Ball}(x, 2r)\rt)\rt),\, \, \mx \in \scG,
\end{align}
then the distribution of this modified area-interaction process is identical to the distribution of type-$1$ points of the PSM model with $\kappa = \lambda + \beta$, $\kappa_1 = \lambda$ and $\kappa_2 = \beta$; see Figure~\ref{fig:AIP} (b). This is because from the property~(i) in the definition of PPPs, 
for any $\mx~\in~\scG$, the probability that none of the points of a realization of a $\beta$-homogeneous PPP falls within the set $S \cap \lt(\mathlarger{\cup}_{x \in \mx} \mathsf{Ball}(x, 2r)\rt)$ is equal to $\exp\lt(-\beta\,\mathsf{Vol}\lt(S \mathlarger{\cap} \lt(\mathlarger{\cup}_{x \in \mx} \mathsf{Ball}(x, 2r)\rt)\rt)\rt)$. Further interesting fact is that if $S$ is periodic, both \eqref{eqn:pot_aip} and \eqref{eqn:pot_aip_alt} are the same. Hence, under the periodic assumption, an area-interaction process can be viewed as a realization of one type of points of a PSM model, and vice versa. This is the reason why area-interaction processes are sometimes referred as PSM models.\\

In the definition of PSM models, type-$1$ and type-$2$ points are independent PPPs on $S$ with intensities $\kappa_1$ and $\kappa_2$, respectively. Instead, if we assume that the type-$2$ points constitute a $\kappa_2$ homogeneous PPP on a bigger set $S(r)$ such that 
\[
\mathlarger{\cup}_{x \in S} \mathsf{Ball}(x, 2r) \subseteq S(r)
\]
(when $S = [0,1]^d$, $S(r)$ can be $[-2r, 1 + 2r]^d$). Then, with the choice of $\kappa_1 = \lambda$ and $\kappa_2 = \beta$, the distribution of type-$1$ points of this modified PSM model is identical to the distribution of the area-interaction process.

\begin{figure*}[h!]
    \centering
    \begin{subfigure}[t]{0.5\textwidth}
        \centering
        \includegraphics[height=0.8\textwidth]{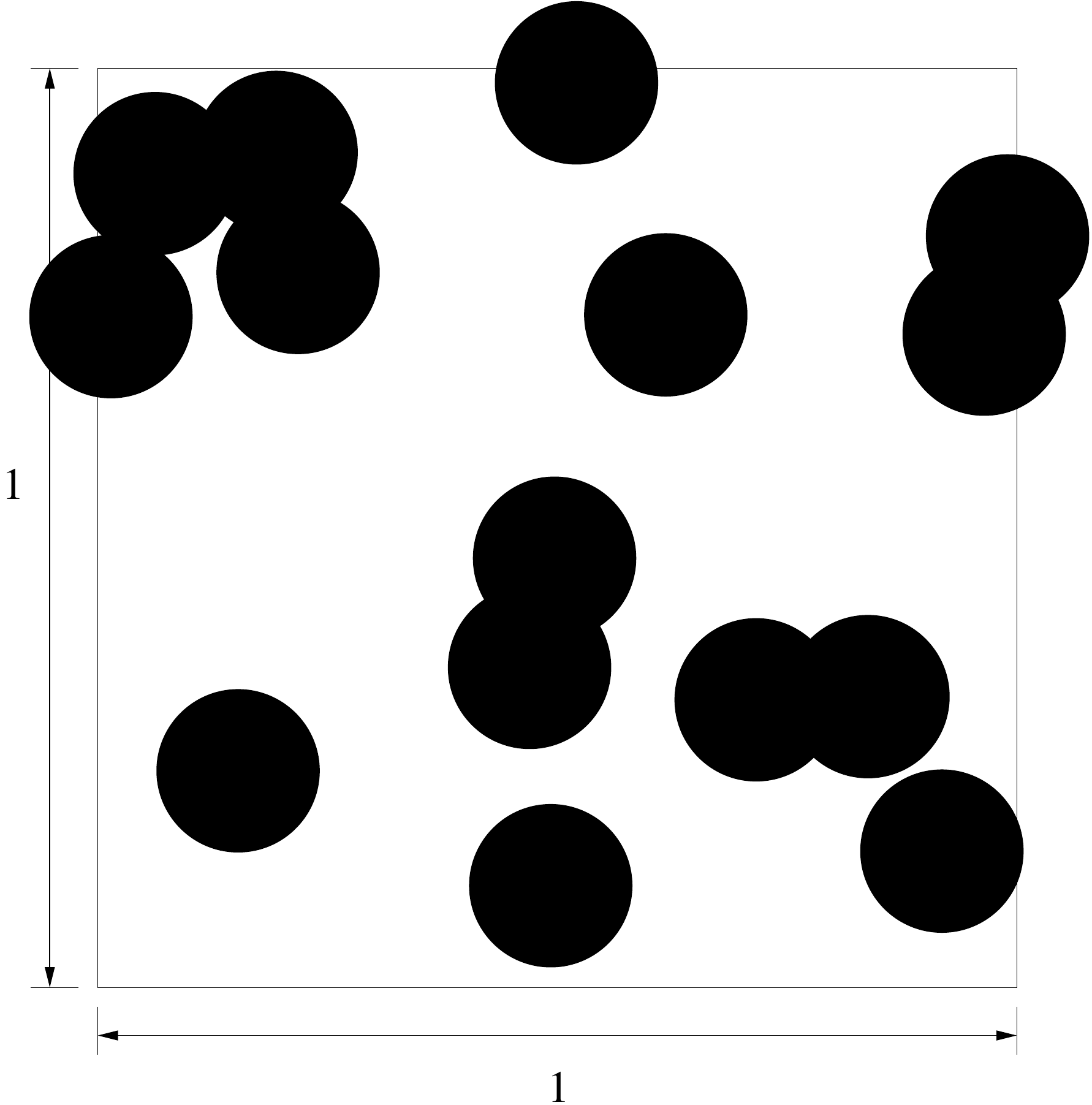}
        \caption{}
    \end{subfigure}%
    ~
    \begin{subfigure}[t]{0.5\textwidth}
        \centering
        \includegraphics[height=0.8\textwidth]{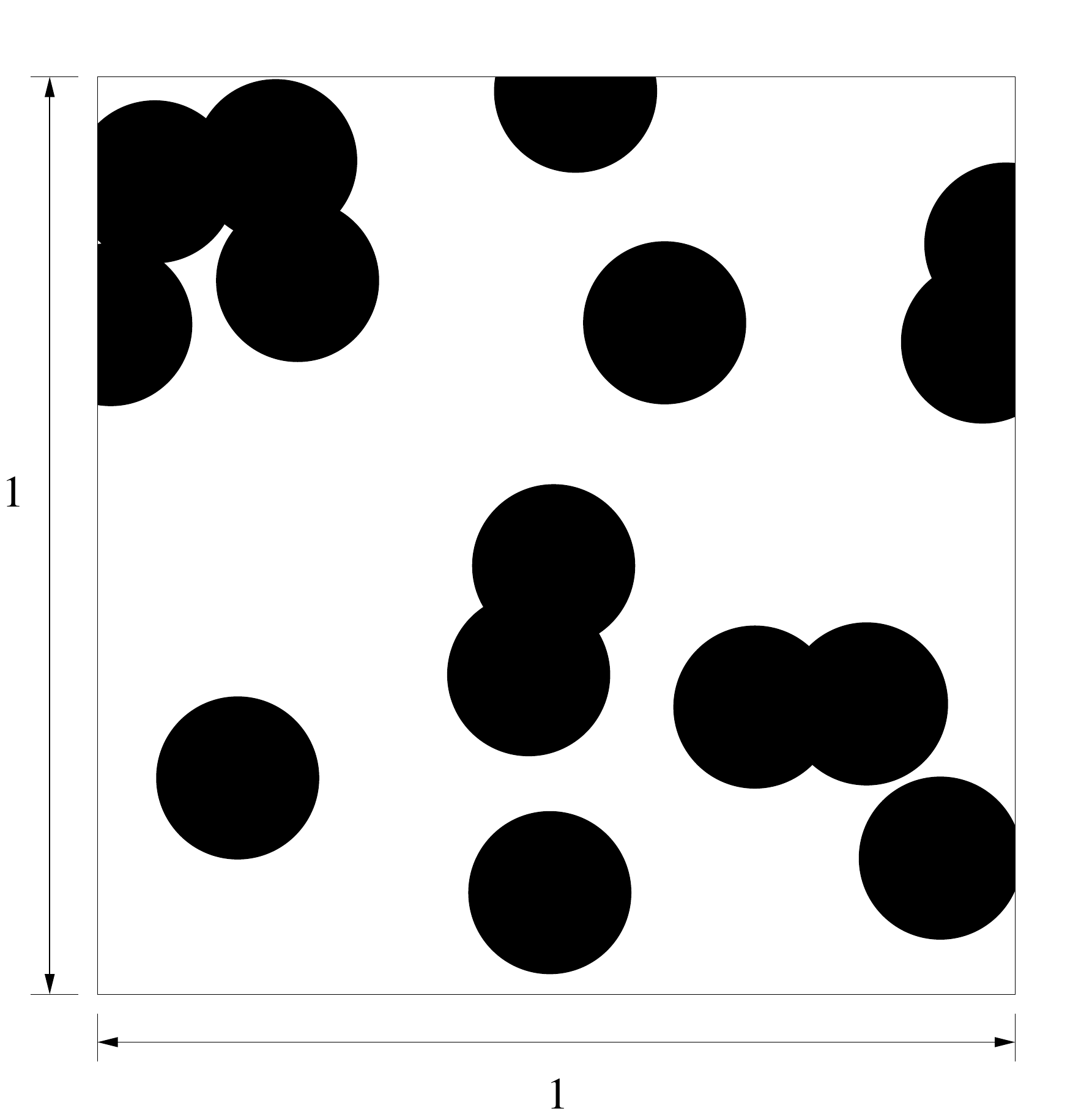}
        \caption{}        
    \end{subfigure}%
    \caption{\footnotesize A realization $\mx = \{x_1, \dots, x_{15}\}$ of an area-interaction process on a unit square $S = [0,1]^2$, where each $x_i$ is the center of a circle with radius $2r$. The dark regions in Panel (a) and Panel (b) are $\cup_{x \in \mx} \mathsf{Ball}(x,r)$ and $S \cap \lt(\cup_{x \in \mx} \mathsf{Ball}(x, 2r)\rt)$, respectively.}
    \label{fig:AIP}
\end{figure*}



\section{Partial Rejection Sampling}
\label{sec:PRS}
In this section, we briefly discuss the partial rejection sampling (PRS) method proposed in \citet[Section~4]{GJL17}. This method generates perfect samples from a product distribution, conditioned on none of a number of {\it bad} events happening. \\

To be precise, let $\mY = \{Y_1, Y_2, \dots, Y_n\}$ be a set of easy to simulate independent random objects on a probability space $(\varOmega, \mathcal{F}, \pp)$, taking values on $\mathcal{Y}$. Suppose that $\mu^{\otimes}$ is the distribution of $\mY$. Clearly,  $\mu^{\otimes}$ is a product distribution. Without loss of generality, we assume that $\mathcal{Y}$ is the support of $\mu^{\otimes}$. Let $\{{B_v \in \mathcal{F}} : v \in V\}$ be a set of bad events indexed by elements of a finite set~$V$.  Each bad event $B_v$ depends on a subset of $\mY$. Let $\mathcal{I}(v) \subseteq \{1,2,\dots,n\}$ be the largest set such that $B_v$ is dependent on $Y_i$ for all $i \in \mathcal{I}(v)$; that is, $\mathcal{I}(v)$ is the smallest set such that the set of variables $\{Y_i : i \in \mathcal{I}(v)\}$ imply whether the event $B_v$ occurs or not. By definition, $B_v$ does not depend on $\lt\{Y_i : i \in \{1, \dots, n \}\setminus \mathcal{I}(v) \rt\}$.  The goal of PRS is to generate perfect samples from $\mu^\otimes$, conditioned on the event that none of the bad events $\{B_v : {v \in V}\}$ occur. \\

One can generate the desired samples using the naive rejection sampling algorithm: repeatedly generate a sample from $\mu^\otimes$ until none of the bad events occur. The last sample has the desired distribution. In each iteration of this naive method, a fresh copy of the entire set $\mY$ is generated. Whereas, as we see below, in each iteration of the PRS method, only a subset of $\mY$ is resampled based on which bad events  occurred  in the previous iteration. This helps to significantly reduce the running time complexity compared with naive rejection sampling.\\

For any $ {u, v \in V}$, we write $u \leftrightarrow v$ if $\mathcal{I}(u) \cap \mathcal{I}(v) \neq \varnothing$. Define the {\em dependency graph} $G = (V, E)$ to be the graph, with the vertex set $V$ and edge set $E$ given by $$E = \lt\{ \{u, v\} : u, v \in V, u \neq v\, \text{ and }\, u \leftrightarrow v \rt\}.$$ That is, there is an edge between two nodes in $V$ if the bad events associated with the nodes depend on at least one common random object. 
For $\omega \in \varOmega$, let $$\mathsf{Bad}(\mY(\omega)) = \lt\{v \in V : \omega \in B_v \rt\}.$$ For any subset ${W \subseteq V}$, let $\partial W$ be the boundary of the set $W$ defined by 
$$\partial W = \{ v \in V: v \notin W \text{ and } \exists\, u \in W \text{ such that } u \leftrightarrow v\}.$$ 
Also define $\displaystyle \mathcal{I}(W) = \cup_{u \in W}\,  \mathcal{I}(u)$, and for any assignment $\my = \{y_i : i = 1, \dots, n \} \in \mathcal{Y}$ of $\mY$,  
let $\my|_W := \{y_i(\omega) : i \in \mathcal{I}(W) \}$ denote the partial assignment of $\my$ restricted to $\mathcal{I}(W)$. For any two assignments $\my, \my'$ of $\mY$, if $\my|_W = \my'|_W$ then $\my'$ is called an {\em extension} of $\my|_W$. Furthermore, an event $B$ is said to be {\em disjoint} from $\my|_W$ if either $\mathcal{I}(v) \cap \mathcal{I}(W) = \varnothing$ or $B$ can not occur for any extension of~$\my|_W$. \\

Algorithm~\ref{alg:PRS} generates a perfect sample ${\mY \sim \mu^\otimes}$, conditioned on the event that none of the bad events $B_v$ occurs. In each iteration of Algorithm~\ref{alg:PRS}, the inner while-loop constructs the {\em resampling} set ${\mathsf{Res} \subseteq V}$. It starts with ${\mathsf{Res} = \mathsf{Bad}(\mY)}$ where the initial assignment of random objects is~$\mY$, and recursively adds to $\mathsf{Res}$ the set of all the boundary vertices that are not disjoint from $\mathsf{Res}$, until there are no more boundary vertices to add. The final $\mathsf{Res}$ is the resampling set, and all the random objects with indices in $\bigcup_{u \in \mathsf{Res}} \mathcal{I}(u)$ are resampled. This construction is deterministic, in the sense that the final resampling set is a deterministic function of $\mY$. \\

\begin{algorithm}[H]
 \caption{PRS}
 \label{alg:PRS}
\SetAlgoLined
 Simulate $Y_1, Y_2, \dots, Y_n$ independently\\
 $\mY \leftarrow \{Y_1, Y_2, \dots, Y_n\}$\\
 \While{$\mathsf{Bad}(\mY) \neq \varnothing$}{
 $\mathsf{Res} \leftarrow \mathsf{Bad}(\mY)$ and $N \leftarrow \varnothing$\\
 \While{${\partial \mathsf{Res} \setminus N} \neq \varnothing$}{
  Let $D = \{v \in \partial \mathsf{Res} \setminus N :  B_v \text{ is disjoint from } \mY |_{\mathsf{Res}}\}$\\
  $N \leftarrow N \cup D$\\
  $\mathsf{Res} \leftarrow {\mathsf{Res}\cup\lt( \partial \mathsf{Res}\setminus N\rt)}$
 }
 Resample only the objects in $\lt\{ Y_i : i \in \mathcal{I}(\mathsf{Res})\rt\}$\\
 }
 Output $\mY$\\
\end{algorithm}
\ \\
We refer to \citet{GJL17} for a proof of correctness of the algorithm. We must note that in \citet{GJL17}, each $Y_i$ is a real-valued random variable, where as in this paper, we allow a more general setting by treating each $Y_i$ as a random object. However, the correctness proof still holds true for this general case as well.

\begin{example}[\em Hard-core model on a lattice]
\label{exp:hcm}
\normalfont
To illustrate the PRS algorithm, consider the following hard-core model  defined on a square lattice. 
Each node $i$ of the lattice is associated with an independent Bernoulli random variable $Y_i \sim \mathsf{Bern}\lt(\frac{\lambda}{1 + \lambda}\rt)$ for some $\lambda > 0$. The node $i$ is said to be occupied if $Y_i = 1$. Associated with each edge $\{i,j\}$, there is a bad event $B_{i,j}$ which holds if both the endpoints $i$ and $j$ are occupied; see Figure~\ref{fig:HC}. If we let $\mu^\otimes$ be the distribution of $Y_i$'s, then using the PRS algorithm we can generate a sample $\mY \sim \mu^\otimes$ conditioned on none of the bad events occurring. \\

\begin{figure*}[h!]
    \centering
        \begin{subfigure}[t]{0.33\textwidth}
        \centering
        \includegraphics[height=1.5in]{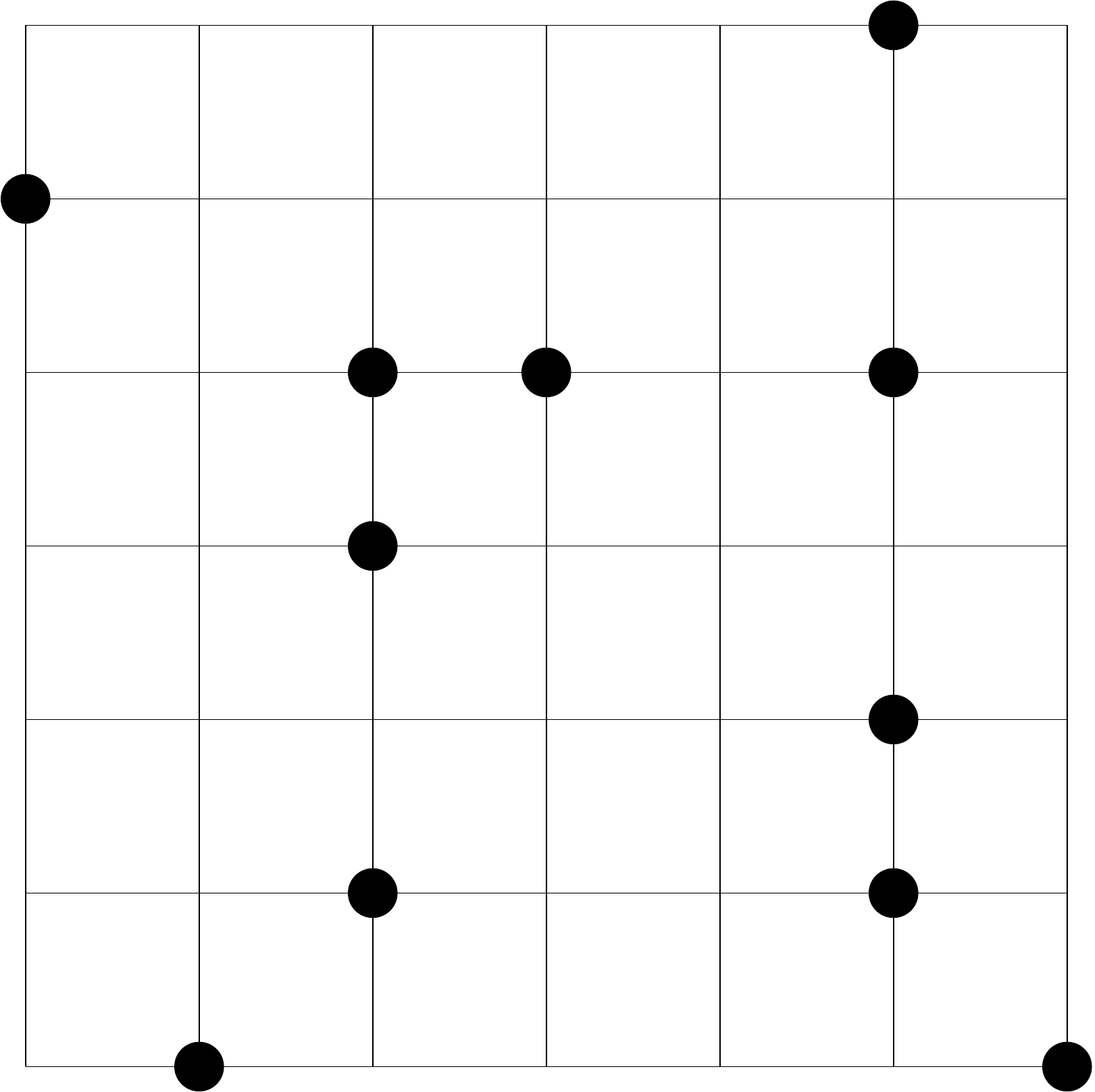}
        \caption{}
    \end{subfigure}%
    ~
    \begin{subfigure}[t]{0.33\textwidth}
        \centering
        \includegraphics[height=1.5in]{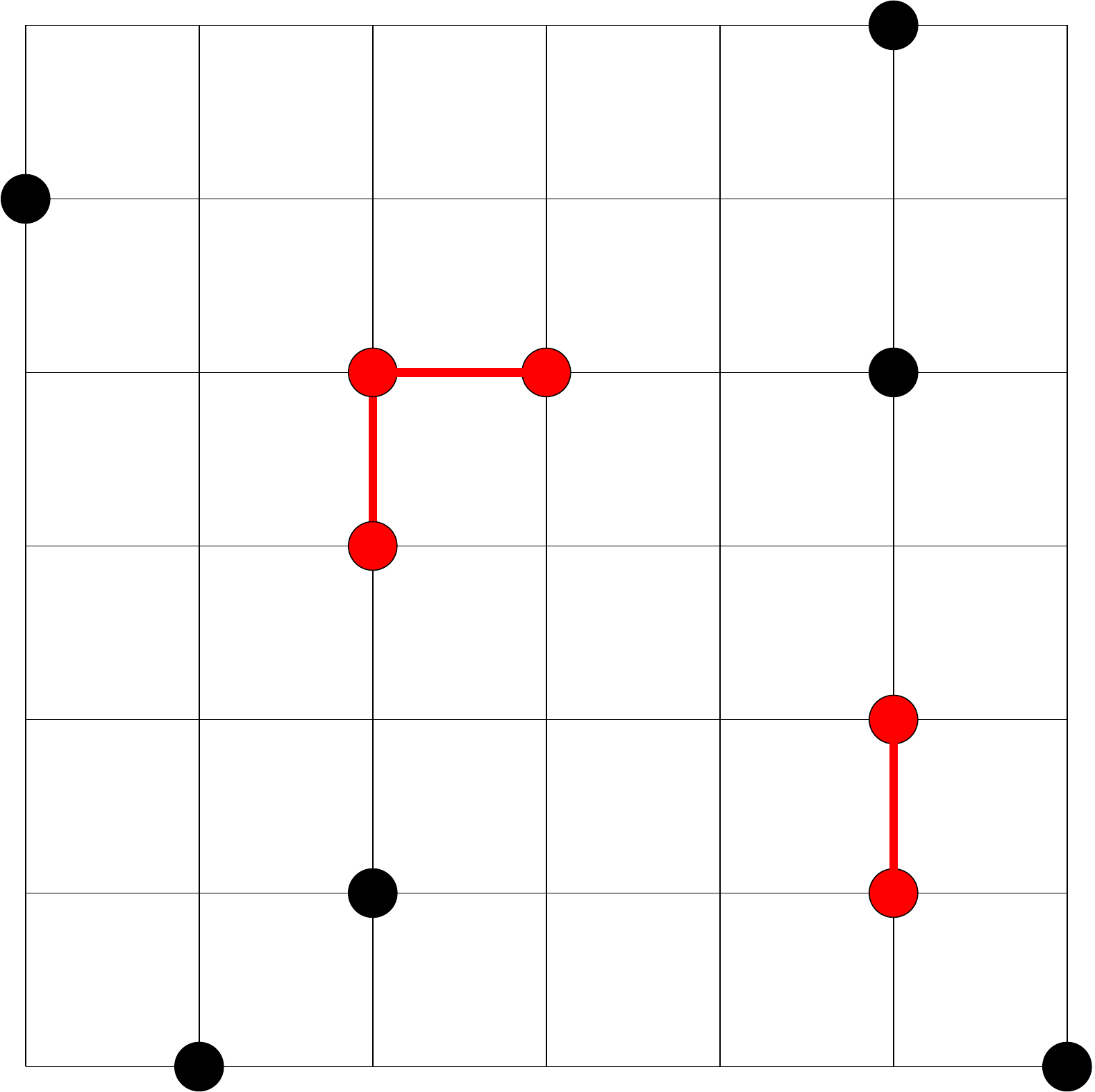}
        \caption{}        
    \end{subfigure}%
    ~ 
    \begin{subfigure}[t]{0.33\textwidth}
        \centering
        \includegraphics[height=1.5in]{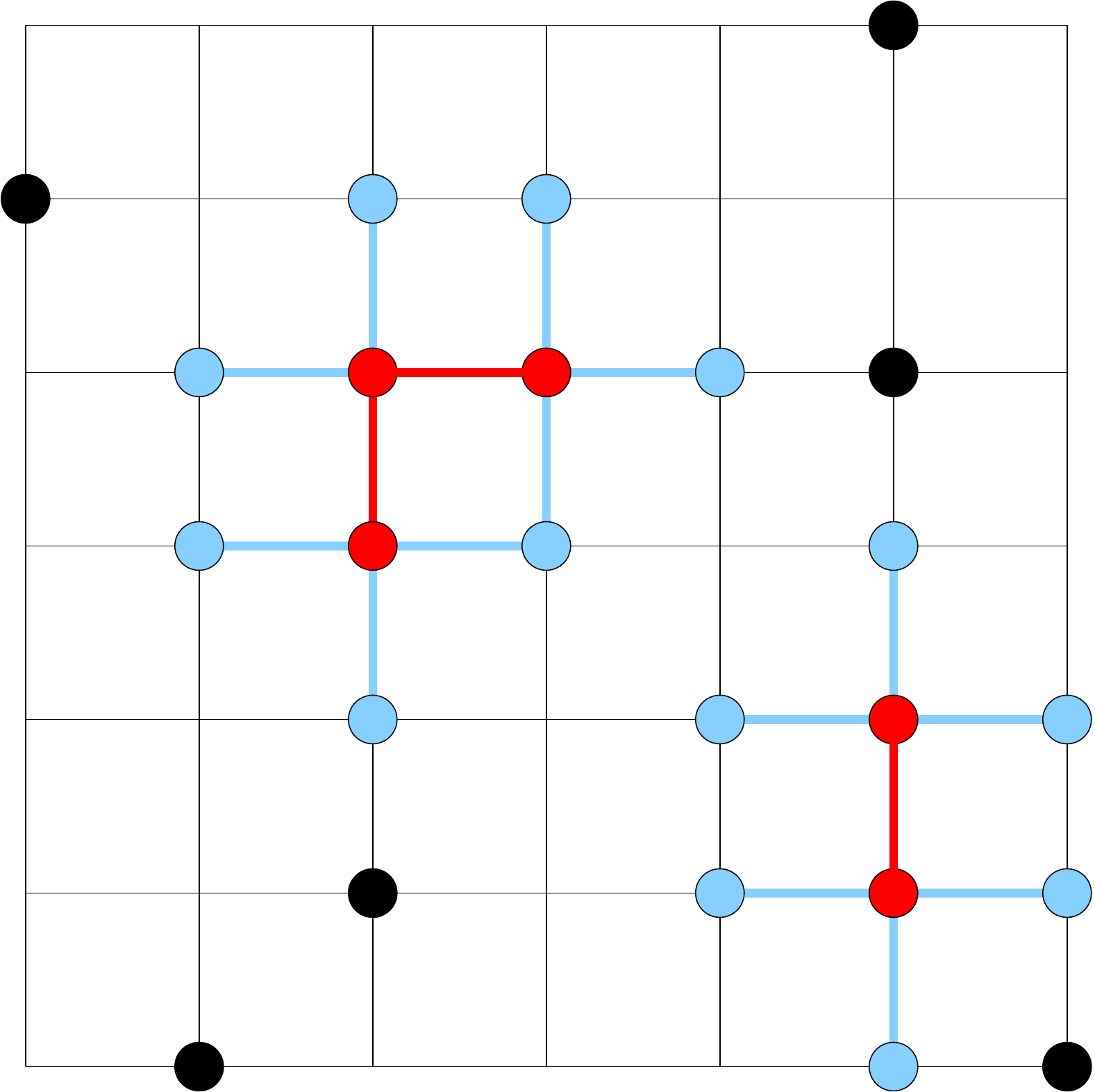}
        \caption{}
    \end{subfigure}
    \caption{\footnotesize Hard-core model on a square lattice. Panel (a): A typical realization of the hard-core model, where the occupied nodes are denoted by dark circles. Panel (b): The red coloured edges denote the bad events. Panel (c): The red and blue coloured edges together are the final resampling set; all the red and blue nodes will be resampled.}
    \label{fig:HC}
\end{figure*}

The corresponding dependency graph $G = (V, E)$ consists of $V$, the set of all the edges in the lattice, and 
\[ 
E = \lt\{ \{v,u\}: v \neq u, v \text{ and } u \text{ are connected by a common node}\rt\}.
\]
Clearly, if $v = \{i,j \} \in V$, we have $\mathcal{I}(v) = \{i,j\}$. As shown in Section~6.2 of \citet{GJL17}, for this hard-core model, it is easy to find the resampling set for any given set of bad events. Suppose at an iteration of the algorithm, if $\mathsf{Bad}$ is the set of bad edges of the lattice, then the resampling set $\mathsf{Res}$ is the union of $\mathsf{Bad}$ and $\partial \mathsf{Bad}$, where one endpoint of each edge in $\partial \mathsf{Bad}$ is {\em not} occupied and the other endpoint is shared with one of the edges in $\mathsf{Bad}$; see Figure~\ref{fig:HC}. 
\qed
\end{example}

As mentioned earlier, in general, generating a sample from $\mu^\otimes$ conditioned on none of the bad events happening is {\bf NP}-hard. However, under some additional conditions, Algorithm~\ref{alg:PRS} can be efficient in the sense that the expected number of iterations of the algorithm can be $O(\log |V|)$; see \citet{GJL17}. 
Lemma~\ref{lem:supermartingale} deals with one such case. Refer to \citet[Section~5]{GJL17} for a proof. Let $A_{ u, v}$ be the event that the partial assignment on $\mathcal{I}(v) \cap \mathcal{I}(u)$ can be extended to make $B_v$ occur, that is, with $W = \mathcal{I}(u) \cap \mathcal{I}(v)$,
$$A_{u,v} = \lt\{\omega \in \varOmega : \exists\, {\omega' \in B_v}  \text{ such that } \mY(\omega)|_W =  \mY(\omega')|_W\rt\}.$$
In particular, if $\{u,v\} \in E$ then $u \leftrightarrow v$ implies that $W = \mathcal{I}(u) \cap \mathcal{I}(v) \neq \varnothing$ and hence $A_{u, v}$ is the set of $\omega$ for which $B_v$ is not disjoint from $\mY(\omega)|_W$.\\

Define $\displaystyle p = \max_{v \in V } \pp_{\mu^{\otimes}}(B_v)$ and $\displaystyle q = \max_{\{u,v\} \in E} \pp_{\mu^{\otimes}}(A_{u.v})$. Let $\mathsf{Bad}_t$ and $\mathsf{Res}_t$ be, respectively, the set of bad vertices and the resampling set at iteration $t \geq 0$ of Algorithm~\ref{alg:PRS}. Further let $\Delta$ be the maximum degree of the dependency graph $G$.\\

\begin{lemma}[Lemma~5.4 of \citet{GJL17}]
\label{lem:supermartingale}
 For any ${\Delta \geq 2}$, if ${6 \mathrm{e} p \Delta^2 \leq 1}$ and ${3 \mathrm{e} q \Delta \leq 1}$, then  for all $t \geq 0$, $$\ee\lt[|\mathsf{Res}_{t+1}| \big| \mathsf{Res}_0, \dots, \mathsf{Res}_t  \rt] \leq (1 - p) |\mathsf{Res}_t|.$$
 
\end{lemma}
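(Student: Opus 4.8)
The plan is to analyze the inner while-loop of Algorithm~\ref{alg:PRS} and bound the expected size of the resampling set conditional on the history. The key observation is that $\mathsf{Res}_{t+1}$ is obtained by starting from $\mathsf{Bad}_{t+1} = \mathsf{Bad}(\mY)$ (the bad set after the resampling done at iteration $t$) and then growing outward along the dependency graph, but only through vertices that are \emph{not disjoint} from the current partial assignment. I would first set up the standard ``witness tree'' or percolation-style argument: every vertex $v$ that ends up in $\mathsf{Res}_{t+1}$ is connected, within $G$, to some vertex in $\mathsf{Bad}_{t+1}$ by a path $v = w_0 \leftrightarrow w_1 \leftrightarrow \cdots \leftrightarrow w_k$ along which each successive vertex was added because it was not disjoint from the assignment restricted to the part already built, i.e.\ the event $A_{w_{j-1}, w_j}$ occurred at the relevant stage. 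This lets me dominate $|\mathsf{Res}_{t+1}|$ by a sum over such ``bad-rooted non-disjoint paths.''

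The core estimate is then a union bound over paths in $G$ emanating from $\mathsf{Res}_t$. A vertex at graph-distance $k$ from a fixed root is reached along at most $\Delta(\Delta-1)^{k-1} \le \Delta^k$ paths, and the probability that a specific path of length $k$ is ``active'' factorizes (using independence of the fresh randomness and the product structure of $\mu^\otimes$) into a head probability $\pp_{\mu^\otimes}(B_v) \le p$ for the endpoint being freshly bad, times one factor of at most $q$ for each of the $k$ edges along which the non-disjointness propagates. So the contribution of paths of length $k$ rooted at a given vertex is at most $p\,\Delta^k q^k$, and summing the geometric series over $k \ge 0$ gives a bound of the form $\dfrac{p}{1 - \Delta q}$ per root vertex. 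Here is where the hypotheses enter: $3\mathrm{e} q\Delta \le 1$ forces $\Delta q \le 1/(3\mathrm{e}) < 1$ so the series converges, and the combinatorial bookkeeping of how many roots in $\mathsf{Bad}_{t+1}$ there are — itself controlled through $6\mathrm{e} p\Delta^2 \le 1$, since $|\mathsf{Bad}_{t+1}|$ is governed by how many vertices near $\mathsf{Res}_t$ are resampled and then turn out bad, each with probability $\le p$ — is what yields the clean contraction factor $(1-p)$ rather than just ``small.''

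Concretely, I would write $|\mathsf{Res}_{t+1}| \le \sum_{v \in \mathsf{Res}_t} X_v$ where $X_v$ counts the resampled-and-propagated vertices attributable to $v$ having been resampled, take conditional expectations, and show $\ee[X_v \mid \mathsf{Res}_0,\dots,\mathsf{Res}_t] \le 1 - p$ for each $v \in \mathsf{Res}_t$ using the path bound above together with the two numerical inequalities; summing over the $|\mathsf{Res}_t|$ choices of $v$ gives the claim. The main obstacle I anticipate is making the attribution rigorous: I must argue that a resampled object in iteration $t$ corresponds to exactly one vertex $v\in\mathsf{Res}_t$, that the freshly drawn values are independent of $\mathsf{Res}_0,\dots,\mathsf{Res}_t$, and that the event ``$v$ is re-bad, or lies on an active path from a re-bad vertex'' has the factorized probability bound claimed — in particular that $\mathsf{Res}_t$ being $\sigma(\text{old randomness})$-measurable while the new draws are fresh is exactly what decouples the two and licenses the union bound. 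Once that measurability/independence bookkeeping is in place, the remainder is the geometric-series estimate, which is routine given $\Delta \ge 2$, $6\mathrm{e} p\Delta^2 \le 1$, and $3\mathrm{e} q\Delta \le 1$.
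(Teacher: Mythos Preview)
The paper does not supply its own proof of this lemma: immediately before stating it, the authors write ``Refer to \citet[Section~5]{GJL17} for a proof,'' and no argument is given beyond the citation. So there is nothing in the paper to compare your sketch against directly.

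That said, your plan is in the spirit of the argument in \citet{GJL17}: one first observes that after resampling $\mathcal{I}(\mathsf{Res}_t)$ the newly bad vertices must lie in $\mathsf{Res}_t \cup \partial \mathsf{Res}_t$ (this is where the factor $\Delta$ and the probability $p$ enter), and then controls the outward growth of $\mathsf{Res}_{t+1}$ from $\mathsf{Bad}_{t+1}$ through ``not-disjoint'' boundary vertices by a branching/percolation estimate governed by $q\Delta$. Two points in your sketch would need tightening to reach the stated conclusion. First, your per-root bound $p/(1-\Delta q)$ does not by itself use the hypothesis $6\mathrm{e}\,p\,\Delta^2 \le 1$; the extra factor of $\Delta$ appears because the set of candidate roots for new bad events has size of order $(1+\Delta)\,|\mathsf{Res}_t|$, not $|\mathsf{Res}_t|$. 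Second, obtaining the precise contraction factor $1-p$ (rather than merely some constant $<1$) and the specific constants $6\mathrm{e}$ and $3\mathrm{e}$ requires a sharper estimate than a bare first-moment union bound over paths; in \citet{GJL17} this comes from a tree-counting/exponential-moment step of Lov\'asz-local-lemma type, which is the source of the factors of $\mathrm{e}$. Your measurability and independence remarks (fresh randomness on $\mathcal{I}(\mathsf{Res}_t)$ being independent of the history $\mathsf{Res}_0,\dots,\mathsf{Res}_t$) are exactly the right ingredients for making that step rigorous.
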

Note that $\mathsf{Bad}_t \subseteq \mathsf{Res}_t$ for all $t \geq 0$. From Lemma~\ref{lem:supermartingale} and the fact that $|\mathsf{Bad}_0| = |\mathsf{Res}_0| = |V|$ (since the algorithm starts with a fresh copy of all the random elements), 
\begin{align}
\label{eqn:bdd_on_BE}
\ee\lt[|\mathsf{Bad}_t|\rt] \leq \ee\lt[|\mathsf{Res}_t|\rt] \leq (1 - p)^t |V|,
\end{align} for all $t \geq 0$, under the hypothesis of the lemma. These observations are useful for the running time complexity analysis in Section~\ref{sec:RTC}.


\section{Perfect Sampling for Gibbs Point Processes}
\label{sec:PRS_GPP}
In this section, we propose a methodology to use the PRS algorithm for generating perfect samples of the Gibbs processes defined in Section~\ref{sec:spp}. For this, we partition the underlying space $S$ and using this partition, we identify certain bad events such that the target distribution can be expressed as a product distribution conditioned on none of these bad events occurring. For the case where $S  = [0,1]^d$, we consider a cubic-grid partitioning and specialize the PRS algorithm. \\

Recall the definition of Gibbs process with distribution $\mu$, given in Section~\ref{sec:spp}.  Assume that the corresponding potential function $\mathcal{U}$ has a finite interaction range $2r$ and 
\begin{align}
\label{eqn:gen_GPS}
\mathcal{U}\lt(\mx \rt) = \sum_{ \{x, y\} \subseteq \mx} f(x,y),\,\,\, \mx \in \scG,
\end{align}
for a function $f : S \times S \rightarrow \reals_+ \cup \{\infty \}$ such that $f(x, y) = f(y,x)$. Recall that $\mu \ll \rho$, with $\rho$ being the distribution of a (marked) PPP.
Clearly, both PIP process or PSM model (defined in Section~\ref{sec:spp}) can be seen as special cases of the above description. \\

Suppose $\lt\{C_1, C_2, \dots, C_n\rt\}$ is a partition of $S$ (i.e., the $C_i$'s are mutually disjoint and $\cup_{i = 1}^n C_i = S$). 
Let $V = \lt\{ v = \{i,j\} : \mathsf{Dist}(C_i, C_j) < 2r \text{ and } i \neq j\rt\}$ be the set of unordered pairs $\{i,j\}$ such that points that fall in $C_i$ can interact with points in $C_j$ and vice versa. As a consequence of \eqref{eqn:gen_GPS}, for any  $\mx \in \scG$,
\[
\mathcal{U}(\mx) = \sum_{i = 1}^n \mathcal{U}(\mx_{C_i}) + \sum_{\{i,j\} \in V } \sum_{\substack{x \in \mx_{C_i}\\  \, y\in \mx_{C_j}}} f(x, y).
\]
Hence,
\begin{align*}
\ee_{\rho}\lt[\exp\lt(- \mathcal{U}(\mX)\rt) \rt] &= \ee_{\rho}\lt[\prod_{i = 1}^n\exp\lt(- \mathcal{U}\lt(\mX_{C_i}\rt)\rt)  \prod_{\{i,j\} \in V } \exp\lt(- \sum_{x \in \mX_{C_i}, y\in \mX_{C_j}} f(x, y)\rt)  \rt]\\
              &= \ee_{\rho}\lt[\prod_{i = 1}^n\exp\lt(- \mathcal{U}\lt(\mX_{C_i}\rt)\rt)  \prod_{\{i,j\} \in V } \mathbb{I}\lt\{ U_{i,j} \leq \exp\lt(- \sum_{x \in \mX_{C_i},\, y\in \mX_{C_j}} f(x, y)\rt) \rt\} \rt],
\end{align*}
where $\{U_{i,j} : \{i,j\} \in V \}$ is a set of $i.i.d.$ $\mathsf{Unif(0,1)}$ random variables, independent of everything else.\\

For each $i$, let $\rho_i$ be the distribution of the reference (marked) PPP restricted to the cell $C_i$, that is,  if $\mX \sim \rho$ then $\rho_i$ is the distribution of $\mX_{C_i}$, and $\mX_{C_i}$ and $\mX_{C_j}$ are independent when $i \neq j$ (see the property (ii) in the definition of PPPs). 
Now let $\mu_i$ be the distribution of a Gibbs process on $C_i$ such that $\mu_i \ll \rho_i$ with the interaction range $2r$ and the potential function $\mathcal{U}\lt(\mx \rt) = \sum_{ \{x,y \} \in \mx } f(x,y)$ for all finite subsets $\mx \subseteq C_i$. This means that $\mu_i$ is the distribution of the target Gibbs process restricted to $C_i$.  Furthermore, define bad events 
$$B_{i,j} = \lt\{ \omega \in \Omega : U_{i,j}(\omega) > \exp\lt(- \sum_{x \in \mX_{C_i}(\omega),\,\, y\in \mX_{C_j}(\omega) } f(x, y)\rt) \rt\},\quad \{i,j\} \in V.$$
Let  $\rho = \rho_1\times \dots \times \rho_n$ and $\mu^\otimes := \mu_1 \times \mu_2 \times \dots \times \mu_n$. From the definition of $\mu$, $\mu \ll \mu^\otimes$ and
\begin{align*}
\frac{\mathrm{d} \mu}{\mathrm{d} \mu^\otimes}(\mx) = \frac{1}{\wt Z} \prod_{\{i,j\} \in V}\exp\lt( - \sum_{\{i,j\} \in V} \sum_{x \in \mx_{C_i},\, y\in \mx_{C_j}} f(x, y) \rt), \quad \mx \in \scG.
\end{align*}
Equivalently, $\mu$ is equal to the distribution $\mu^\otimes$ conditioned on none of the bad events $B_{i,j}$ happening. Here, the normalizing constant is $$\wt Z = \pp_{\mu^\otimes}\lt( \cap_{\{i,j \} \in V} B^c_{i,j}\rt) = \ee_{\mu^\otimes}\lt[  \prod_{\{i,j\} \in V} \exp\lt(- \sum_{x \in \mX_{C_i},\, y\in \mX_{C_j}} f(x, y)\rt)  \rt].$$

Since $\mu^\otimes$ is a product measure, if it is possible to generate samples from $\mu_i$'s, we can use the PRS, Algorithm~\ref{alg:PRS}, to generate samples from $\mu$.
The corresponding dependency graph is $G = (V, E)$, where, from the definition, $ E = \{\{u, v\} : u, v \in V \text{ and } u \leftrightarrow v\}$ with $B_{\{i, j\}} = B_{i,j}$.\\

To complete the argument, we need to spell out how to identify the resampling subset $\mathsf{Res}(\mY(\omega))$ of $V$ for every $\omega \in \varOmega$, where $$\mY := \lt\{\mX_{C_i}, \, i = 1,\dots, n, \text{ and } U_{i,j}, \, \{i,j\} \in V \rt\}.$$ This depends on knowing the condition for $B_{i,j}$ being disjoint from $\mY(\omega) |_W$ for all ${W \subseteq V}$ and all ${\{i,j\} \in \partial W}$. Lemma~\ref{lem:disjoint} establishes this condition. \\

To simplify the notion, we can take ${\mathcal{I}(\{i,j\}) = \{i,j\}}$ for all ${\{i,j\} \in V}$. That means, at any iteration of PRS, if ${\{i,j\} \in \mathsf{Res}}$ then $\mX_{C_i}$,  $\mX_{C_j}$ and $U_{i,j}$ will be resampled independently from their respective distributions. 
\begin{lemma}
\label{lem:disjoint}
Let  ${W \subseteq V}$ and  $\{j, k\} \in \partial W$ with ${j \in \mathcal{I}(W)}$. Then for any ${\omega \in \varOmega}$, $B_{j,k}$ is disjoint from the partial assignment $\mY(\omega) |_W$ if and only if $\mathsf{Dist}(\mX_{C_j}(\omega), C_k) \geq 2r$.
\end{lemma}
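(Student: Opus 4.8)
The plan is to unwind the definition of ``disjoint'' and then track which of the three data that determine $B_{j,k}$ --- namely $\mX_{C_j}$, $\mX_{C_k}$ and $U_{j,k}$ --- are pinned down by the partial assignment $\mY(\omega)|_W$ and which remain free over extensions. Recall that $B_{j,k}=\{U_{j,k}>\exp(-\sum_{x\in\mX_{C_j},\,y\in\mX_{C_k}}f(x,y))\}$ and $\mathcal{I}(\{j,k\})=\{j,k\}$. Since $j$ lies in both $\mathcal{I}(\{j,k\})$ and $\mathcal{I}(W)$, the first alternative in the definition of disjointness fails, so ``$B_{j,k}$ disjoint from $\mY(\omega)|_W$'' is equivalent to ``$B_{j,k}$ cannot occur for any extension of $\mY(\omega)|_W$.'' Moreover $j\in\mathcal{I}(W)$ forces $\mX_{C_j}=\mX_{C_j}(\omega)$ in every such extension, while $\{j,k\}\notin W$ (as $\{j,k\}\in\partial W$) leaves $U_{j,k}$ free over $(0,1)$, and $k\notin\mathcal{I}(W)$ (the situation in which the lemma is invoked inside Algorithm~\ref{alg:PRS}) leaves the configuration $\mX_{C_k}$ free over the support of $\mu_k$, which contains every single-point configuration $\{(y,m)\}$ with $y\in C_k$ and $m$ an admissible mark. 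Thus the claim reduces to: holding $\mX_{C_j}(\omega)$ fixed, the event $B_{j,k}$ is attainable by some choice of $(\mX_{C_k},U_{j,k})$ if and only if $\mathsf{Dist}(\mX_{C_j}(\omega),C_k)<2r$.

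For the implication $\mathsf{Dist}(\mX_{C_j}(\omega),C_k)\ge 2r\Rightarrow B_{j,k}$ disjoint, observe that then $\|x-y\|\ge 2r$ for every $x\in\mX_{C_j}(\omega)$ and every $y\in C_k$, so by the finite-range hypothesis $f(x,y)=0$; hence $\sum_{x\in\mX_{C_j}(\omega),\,y\in\mX_{C_k}}f(x,y)=0$ for \emph{every} configuration $\mX_{C_k}$ on $C_k$, so $\exp(-\sum f)=1$, and since $U_{j,k}\le1$ always, the strict inequality defining $B_{j,k}$ fails for every extension.

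For the converse I would prove the contrapositive. If $\mathsf{Dist}(\mX_{C_j}(\omega),C_k)<2r$, pick (by definition of the infimum) $x^\star\in\mX_{C_j}(\omega)$ and $y^\star\in C_k$ with $\|x^\star-y^\star\|<2r$, and consider the extension in which $\mX_{C_k}$ is the single point $y^\star$, carrying --- in the marked (PSM) case --- a mark opposite to that of $x^\star$, so that $f(x^\star,y^\star)>0$; such a configuration lies in the support of $\mu_k$. For this extension $\sum_{x\in\mX_{C_j}(\omega),\,y\in\mX_{C_k}}f(x,y)\ge f(x^\star,y^\star)>0$ (possibly $+\infty$), hence $\exp(-\sum f)<1$; choosing $U_{j,k}$ in the nonempty interval $(\exp(-\sum f),1)$ then puts this extension in $B_{j,k}$, so $B_{j,k}$ is not disjoint from $\mY(\omega)|_W$.

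The analytic content --- the finite-range property collapsing the cross-interaction sum to $0$, and the fact that $U_{j,k}\in(0,1)$ --- is immediate. The step that needs the most care is the first one: making precise, through the convention $\mathcal{I}(\{i,j\})=\{i,j\}$ and the fact that (in the contexts where the lemma is used) only the endpoint $j$ of the boundary edge $\{j,k\}$ belongs to $\mathcal{I}(W)$, that $\mX_{C_k}$ and $U_{j,k}$ are genuinely unconstrained by $\mY(\omega)|_W$; and, in the marked case, exhibiting a mark that realizes a strictly positive cross-interaction $f(x^\star,y^\star)$ inside the support of the reference marked process.
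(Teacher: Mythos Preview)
Your argument is correct and follows essentially the same route as the paper: first observe that the overlap $\mathcal{I}(\{j,k\})\cap\mathcal{I}(W)$ reduces to $\{j\}$ so that only $\mX_{C_j}(\omega)$ is pinned, then use the finite-range property for the forward direction and exhibit an interacting extension for the converse. You are in fact more explicit than the paper's proof---spelling out the role of $U_{j,k}$, the single-point extension on $C_k$, and the marked case---and your hedge on $k\notin\mathcal{I}(W)$ is appropriate, since the paper simply asserts this from $\{j,k\}\in\partial W$.
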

\begin{proof}
Observe that $k \notin \mathcal{I}(W)$, because $j \in \mathcal{I}(W)$ and $\{j,k\} \in \partial W$. Hence $\mathcal{I}(\{j,k\}) \cap \mathcal{I}(W) = \{j\}$. This implies that if $\mathsf{Dist}(\mX_{C_j}(\omega), C_k) \geq 2r$ then $B_{j,k}$ can not occur for any extension of $\{\mX_{C_j}(\omega) \}$, because no matter what is the configuration on $C_k$, it never interacts with the points of $\mX_{C_j}(\omega)$. On the other hand if $\mathsf{Dist}(\mX_{C_j}(\omega), C_k) < 2r$, we can find a $\omega' \in \Omega$ such that $\mX_{C_j}(\omega) = \mX_{C_j}(\omega')$ and $\omega' \in B_{j,k}$ (that is, the points of $\mX_{C_k}(\omega')$ interact with points of $\mX_{C_j}(\omega)$ to result in occurrence of $B_{j,k}$).
\end{proof}

\subsection{Cubic-grid partitioning}
\label{sec:cubic_grid}
Consider the PIP processes and PSM models defined in Section~\ref{sec:spp}. Suppose that $S = [0,1]^d$  is equipped  with a cubic grid of cell edge length $2r$. The area-interaction case is discussed at the end of this section. \\

\begin{figure*}[h!]
\centering
         \includegraphics[width=0.43\linewidth]{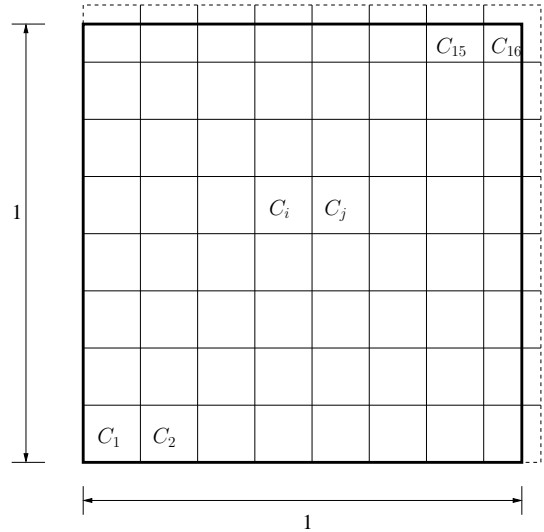}
  \caption{\footnotesize For a Gibbs process with interaction range $2r = 0.13$, the partition of the unit cube $S = [0,1]^2$ using a cubic grid of size $8 \times 8$ with the cell edge length $ 0.13$. Since $7\times 0.13 < 1 < 8\times 0.13$, some of the boundary cells are smaller in size than the other cells. Clearly, for any $i$ and $j$, $\{i, j\} \in V$ if and only if $C_i$ and $C_j$ are adjacent (i.e., $\mathsf{Dist}(C_i, C_j) = 0$). }
  \label{fig:grid}
\end{figure*}

So, the cells are $d$-dimensional cubes with volume $(2r)^d$, except some of the boundary cells that can be rectangular in shape with each edge is of length at most $2r$. When $2r = 1/K$ for some integer $K$, every  cell is a cube. We say that cells $C_i$ and $C_j$ are adjacent if $\mathsf{Dist}(C_i, C_j) = 0$. From this construction, it is evident that $\{ i,j\} \in V$ if and only if $C_i$ and $C_j$ are adjacent. Furthermore, there is no cross interaction between point configurations on two non-adjacent cells, that is, $${\mathcal{U}\lt(\mX_{C_i}(\omega) \cup \mX_{C_j} (\omega) \rt) = \mathcal{U}\lt(\mX_{C_i}(\omega)  \rt) + \mathcal{U}\lt(\mX_{C_j}(\omega) \rt)},$$ for all ${\omega \in \Omega}$ if $C_i$ and $C_j$ are not adjacent to each other; see Figure~\ref{fig:grid}. \\

Recall from Lemma~\ref{lem:disjoint} that the implementation of the PRS algorithm for Gibbs processes depends on deciding whether $\mathsf{Dist}(\mX_{C_i}, C_j) < 2r$, or not, for a given $\{i,j\} \in V$ and a point configuration $\mX_{C_i}$ on the $i^{th}$ cell. The  interesting aspect of this cubic grid partitioning is that for any $\{i,j\} \in V$, $\mathsf{Dist}(\mX_{C_i}, C_j) < 2r$ if and only if the point configuration $\mX_{C_i} = \varnothing$. To verify this claim notice that the 'if' part trivially follows from the definition of $\mathsf{Dist}$, and the 'only if' part follows from the observation that each cell has edges of length at most $2r$ and when $\mX_{C_i} \neq \varnothing$, we can select a point configuration on 
$C_j$ and a value for $U_{i,j}$ so that the bad event $B_{i,j}$ occur, making it not disjoint from $\mX_{C_i}$.
As a consequence, observe that for any realization of $\mY$ in an iteration of the PRS algorithm, $\mathsf{Res}$ is the minimal subset of $V$ such that $\mathsf{Bad} \subseteq \mathsf{Res}$ and 
\begin{align*}
\mX_{C_i} = \varnothing, \quad \text{for all } \, \{i, j\} \in \partial \mathsf{Res} \text{ with }\, i \in \mathcal{I}(\mathsf{Res}).
\end{align*} 

Under this setup, we now restate the PRS algorithm for Gibbs processes. We remind the reader that for each $i$, samples $\mX_{C_i}$ from $\mu_i$ are generated using any existing method such as dCFTP.\\

\begin{algorithm}[H]
 \caption{PRS using cubic-grid partitioning}
 \label{alg:PRS_GPP}
\SetAlgoLined
 Simulate $\mX_{C_i} \sim \mu_i$, $i = 1,\dots, n(r)$, and $U_{i,j} \sim \mathsf{Unif}(0,1)$, $\{i,j \} \in V$, independently\\
 $\mY \leftarrow  \lt\{\mX_{C_i}, \, i = 1,\dots, n, \text{ and } U_{i,j}, \, \{i,j\} \in V \rt\}$\\
 \While{$\mathsf{Bad}(\mY) \neq \varnothing$}{
 $\mathsf{Res} \leftarrow \mathsf{Bad}(\mY)$\\
 \Repeat{$D = \varnothing$}{
  Let $W = \lt\{ \{i,j \} \in \partial \mathsf{Res}:  i \in \mathcal{I}(\mathsf{Res}) \text{ and } \mX_{C_i} \neq \varnothing\rt\}$\\
  $\mathsf{Res} \leftarrow \mathsf{Res} \cup W$\\ 
  \vspace{2mm}
 }
 Resample $\mX_{C_i} \sim \mu_i$, $i \in \mathcal{I}(\mathsf{Res})$, and $U_{i,j} \sim \mathsf{Unif}(0,1)$, $\{i,j\} \in \mathsf{Res}$, independently\\
 }
 Output $\mY$\\
\end{algorithm}

\ \\
To generate perfect samples of an area-interaction process on $S = [0,1]^d$ with inverse temperature $\beta$ and the intensity of the reference PPP is $\lambda$, we use Algorithm~\ref{alg:PRS_GPP} to generate samples of the modified PSM model on ${S(r) = [-2r, 1 + 2r]^d}$ where the reference PPP consists of type-$1$ and type-$2$ points, with type-$1$ points being $\lambda$-homogeneous PPP on $S$ and type-$2$ points being $\beta$-homogeneous PPP on $S(r)$. Type-$1$ points in the output of the algorithm is a sample of the target area-interaction process. Refer Section~\ref{sec:spp} for the connection between area-interaction processes and PSM models. In Algorithm~\ref{alg:PRS_GPP}, instead of $S$, we equip $S(r)$ with a cubic-grid partitioning. On each cell $C_i$, $\mu_i$ is the distribution of a modified PSM model where the reference PPP consists of type-$1$ and type-$2$ points, with type-$1$ points being $\lambda$-homogeneous PPP on $C_i \cap S$ and type-$2$ points being $\beta$-homogeneous PPP on $C_i\setminus S$. 


\section{Running Time Analysis}
\label{sec:RTC}
In this section, we assume that $S = [0,1]^d$ and the target Gibbs distribution $\mu \ll \rho$, with potential function given by \eqref{eqn:gen_GPS} and interaction range $2r \leq 1$. We further assume that $\rho$ is the distribution of a $\kappa$-homogeneous (marked) PPP on $S$ with $\kappa = \kappa_0/(\mathsf{v}_d r^d)$ for some $\kappa_0 > 0$. We analyze the running time complexity of the partitioning based PRS (described in Section~\ref{sec:PRS_GPP}) as $r \to 0$. We further compare this method with two well-known existing methods by establishing trivial lower bounds on the running time complexities of the existing methods.\\

Suppose that $\{ C_1, C_2, \dots, C_{n(r)}\}$ is a partition of $S$ such that samples from $\mu_i$ can be simulated  using  any of the existing perfect sampling algorithms, such as the dCFTP  (see Section~\ref{sec:PRS_GPP} for the definition of $\mu_i$).  As shown in Figure~\ref{fig:grid}, one possible partitioning is a cubic grid. For each $i = 1,2, \dots, n(r)$,  let $N_i$ be the number of cells $C_j$, $j \neq i$, such that $\mathsf{Dist}(C_i, C_j) < 2r$. Theorem~\ref{thm:complex} below establishes that if the volume of each cell is chosen to be of order $r$ and the $N_i$'s are uniformly bounded for all $r$, then there exist a constant $\bar \kappa$ such that for all $\kappa_0 \leq \bar \kappa$, the expected number of iterations the PRS algorithm takes to generate a perfect sample is $O\lt(\log \lt(\frac{1}{r}\rt) \rt)$. Observe that for the cubic-grid partition of Subsection~\ref{sec:cubic_grid}, $N_i$ is bounded by a constant uniformly and ${\frac{\mathsf{Vol}(C_i)}{r^d} \leq 2^d}$, for all $r$ and $i$.  Hence the conditions in Theorem~\ref{thm:complex}~hold.\\

As mentioned in \citet{GJL17}, an interesting feature of the PRS algorithm is that it is distributive. In particular, if we assume that each cell $i$ is associated with a processor that can generate samples from $\mu_i$ and can communicate with other processors within a constant time, then as we argue later in the proof of Theorem~\ref{thm:complex}, using parallel programming, the expected running time complexity of finding the resampling set  in any iteration can be reduced to $O(1)$. In that case the expected running time complexity of the PRS algorithm is simply of order of the expected number of iterations, which is $O\lt(\log \lt(\frac{1}{r}\rt) \rt)$. See, for example, \citet{FY18, FSY17} for recent works on distributed sampling.

\begin{theorem}
\label{thm:complex} 
Suppose that there exists constants $a, b > 0$ such that for all $i = 1, \dots, n(r)$, $N_i \leq a$ and
\begin{align}
\label{eqn:hypo_thm}
\frac{\mathsf{Vol}(C_i)}{r^d} \leq b \, \, \text{ for all } \, r.
\end{align} 
Then there exists $\bar \kappa > 0$ such that for all $\kappa_0 \leq \bar \kappa$, we have
\begin{itemize}
\item[(i)] the expected number of iterations of the PRS algorithm is $O\lt(\log \lt(\frac{1}{r}\rt) \rt)$,
\item[(ii)] the expected running time complexity of the algorithm is $O\lt(\frac{1}{r^{d}} \log \lt( \frac{1}{r}\rt)\rt)$, and
\item[(iii)] if the implementation of the algorithm is distributive, then the expected running time complexity is $O\lt(\log \lt(\frac{1}{r}\rt) \rt)$.
\end{itemize}
\end{theorem}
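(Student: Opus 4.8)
\medskip
\noindent\textbf{Proof strategy.}\ The plan is to verify that the hypotheses of Lemma~\ref{lem:supermartingale} hold with constants that do not depend on $r$ once $\kappa_0$ is small enough, so that the geometric decay $\ee[|\mathsf{Bad}_t|]\le(1-p)^t|V|$ of \eqref{eqn:bdd_on_BE} becomes available; parts (i)--(iii) then follow from this together with routine accounting of the per-iteration cost. For the maximum degree: with $\mathcal{I}(\{i,j\})=\{i,j\}$, a vertex $\{i,j\}\in V$ is adjacent in $G$ only to vertices $\{i,m\}$ with $\mathsf{Dist}(C_i,C_m)<2r$ or $\{j,m\}$ with $\mathsf{Dist}(C_j,C_m)<2r$, so its degree is at most $N_i+N_j-2\le 2a-2$; hence $\Delta\le 2a-2$, a constant. (If $\Delta<2$, or if $p=0$, the dependency structure is trivial and the algorithm stops in one iteration, so assume $\Delta\ge 2$ and $p>0$.) For $p$ and $q$, the key estimate is a bound on $\pp_{\mu_i}(\mX_{C_i}\neq\varnothing)$: under the reference PPP $\rho_i$ the count $|\mX_{C_i}|$ is Poisson with mean $\kappa\,\mathsf{Vol}(C_i)=\frac{\kappa_0}{\mathsf{v}_d r^d}\mathsf{Vol}(C_i)\le\frac{b\kappa_0}{\mathsf{v}_d}$ by \eqref{eqn:hypo_thm}, so $\pp_{\rho_i}(\mX_{C_i}\neq\varnothing)\le\frac{b\kappa_0}{\mathsf{v}_d}$, while the normalizer satisfies $Z_i=\ee_{\rho_i}[\exp(-\mathcal{U}(\mX_{C_i}))]\ge\pp_{\rho_i}(\mX_{C_i}=\varnothing)\ge\exp(-b\kappa_0/\mathsf{v}_d)$ since $\mathcal{U}(\varnothing)=0$; as $\mathrm{d}\mu_i/\mathrm{d}\rho_i\le 1/Z_i$ this yields $\pp_{\mu_i}(\mX_{C_i}\neq\varnothing)\le c_1(\kappa_0):=\frac{b\kappa_0}{\mathsf{v}_d}\exp(b\kappa_0/\mathsf{v}_d)$, and $c_1(\kappa_0)\to 0$ as $\kappa_0\to 0$. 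Because $f\ge 0$, the event $B_{i,j}$ can occur only if some point of $\mX_{C_i}$ interacts with some point of $\mX_{C_j}$, which forces both cells to be nonempty, so by independence $p\le c_1(\kappa_0)^2$; and for an edge $\{u,v\}\in E$ with, say, $u=\{i,j\}$ and $v=\{j,k\}$, Lemma~\ref{lem:disjoint} identifies $A_{u,v}$ with the event $\{\mathsf{Dist}(\mX_{C_j},C_k)<2r\}\subseteq\{\mX_{C_j}\neq\varnothing\}$, so $q\le c_1(\kappa_0)$.

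Given these bounds, choose $\bar\kappa>0$ so small that $6\mathrm{e}\,c_1(\kappa_0)^2(2a-2)^2\le1$ and $3\mathrm{e}\,c_1(\kappa_0)(2a-2)\le1$ for all $\kappa_0\le\bar\kappa$, which is possible since $c_1(\kappa_0)\to 0$; then $6\mathrm{e}p\Delta^2\le1$ and $3\mathrm{e}q\Delta\le1$, so Lemma~\ref{lem:supermartingale} applies and \eqref{eqn:bdd_on_BE} gives $\ee[|\mathsf{Bad}_t|]\le(1-p)^t|V|$. Let $T$ be the number of outer iterations. Since $|\mathsf{Bad}_t|$ is integer-valued, $\pp(T>t)=\pp(|\mathsf{Bad}_t|\ge1)\le(1-p)^t|V|$ by Markov's inequality, hence $\ee[T]=\sum_{t\ge0}\pp(T>t)=O\!\left(\frac{1+\log|V|}{p}\right)$. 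Now $|V|\le\frac12 a\,n(r)=O(r^{-d})$ (the cells partition $[0,1]^d$ and are of order $r^d$), so $\log|V|=O(\log(1/r))$; and $p$ is bounded below by a positive constant depending only on $\kappa_0$ and $d$, uniformly in $r$ --- with probability bounded below a cell carries exactly one point, which is then uniformly distributed over the cell, and two points lying in adjacent cells interact with a scale-invariant positive probability, giving a uniform lower bound on $\pp_{\mu^\otimes}(B_{i,j})$ (this holds for the cubic grid, and more generally whenever $\mathsf{Vol}(C_i)=\Theta(r^d)$). Therefore $\ee[T]=O(\log(1/r))$, which is (i).

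For (ii), one resampling of a cell is a single run of the underlying perfect sampler (e.g.\ dCFTP) on a region carrying $O(1)$ points on average, hence $O(1)$ expected time; an outer iteration costs $O(|V|)$ to evaluate $\mathsf{Bad}$ and to grow $\mathsf{Res}$ (the inner loop scans $\partial\mathsf{Res}$ a total of $O(\Delta|V|)=O(|V|)$ times) plus at most $O(|\mathsf{Res}_t|)=O(|V|)$ resamplings, so the expected total cost is $O(|V|)\cdot\ee[T]=O\!\left(r^{-d}\log(1/r)\right)$, which is (ii). For (iii), place one processor on each cell, able to exchange messages with the processors of adjacent cells in $O(1)$ time. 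Within an outer iteration the processors resample their cells in parallel ($O(1)$ expected each), each processor checks in $O(1)$ the at most $a$ bad events it participates in, and the inner loop runs as synchronous communication rounds, each of $O(1)$ parallel time; the number of such rounds is governed by the depth of the clusters of occupied cells (cells containing $\ge 1$ point) that meet $\mathsf{Bad}_t$, and for $\kappa_0\le\bar\kappa$ --- shrinking $\bar\kappa$ further if needed so that $c_1(\bar\kappa)$ lies below the site-percolation threshold of the cell-adjacency graph --- these clusters are finite with exponentially small tails, so the expected number of rounds per iteration is $O(1)$. Hence the expected parallel cost of one iteration is $O(1)$, and combined with (i) the expected parallel running time is $O(\ee[T])=O(\log(1/r))$, which is (iii).

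The step I expect to be the main obstacle is this last one: making precise the claim that the inner resampling loop terminates in $O(1)$ expected communication rounds per iteration. This needs a subcritical site-percolation estimate on the occupied cells that is uniform in $r$, and some care that it is the number of rounds --- the depth of the clusters that have to be explored --- rather than the typically much larger total size $|\mathsf{Res}_t|$ of the resampling set, that multiplies the iteration count; in particular one must control the maximum cluster depth over all bad seeds, not merely a single cluster. The other indispensable ingredient is securing the $r$-free bounds on $p$, $q$, $\Delta$ (together with the uniform positive lower bound on $p$) through the comparison $\mu_i\ll\rho_i$, although that part is routine.
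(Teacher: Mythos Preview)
Your proposal is correct and follows essentially the same route as the paper: bound $\Delta$, $p$, and $q$ by constants depending only on $\kappa_0$ (via the nonemptiness of cells under $\mu_i$), choose $\bar\kappa$ so that the hypotheses of Lemma~\ref{lem:supermartingale} hold, and read off (i)--(iii) from the contraction in~\eqref{eqn:bdd_on_BE}. The bounds you give for $p$ and $q$ are the same as the paper's up to cosmetic differences (the paper computes $\pp_{\mu_i}(\mX_{C_i}=\varnothing)=\exp(-\kappa\,\mathsf{Vol}(C_i))/\widetilde Z_i$ exactly and uses $\widetilde Z_i\le 1$; you bound $\pp_{\mu_i}(\mX_{C_i}\neq\varnothing)\le \pp_{\rho_i}(\mX_{C_i}\neq\varnothing)/Z_i$).

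Two places where the paper's execution differs from yours are worth noting. For (ii), instead of bounding each outer iteration by $O(|V|)$ and multiplying by $\ee[T]$, the paper sums directly: $\sum_{t\ge 0}\ee[|\mathsf{Res}_t|]\le \sum_{t\ge 0}(1-p)^t|V|=|V|/p$, which already gives the claimed bound without going through $\ee[T]$. For (iii)---the part you rightly flag as the obstacle---the paper does \emph{not} appeal to site percolation or shrink $\bar\kappa$ further. It instead reuses the already-established condition $3\mathrm{e}q\Delta\le 1$: from each bad seed a BFS adds at most $\Delta$ neighbours, each of which in expectation spawns at most $q\Delta$ further additions, so the expected total added per seed is bounded by $\Delta\sum_{k\ge 0}(q\Delta)^k\le 3\mathrm{e}\Delta/(3\mathrm{e}-1)=O(1)$. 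Running these BFSes in parallel gives $O(1)$ expected work per iteration. This branching argument is cleaner than the percolation route and needs no extra hypothesis. Your careful remark that one implicitly needs $p$ bounded below uniformly in $r$ (hence $\mathsf{Vol}(C_i)=\Theta(r^d)$, as in the cubic grid) is a point the paper glosses over.
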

\begin{proof}
The expected number of  points generated under $\rho$ within cell $i$ is $\kappa \mathsf{Vol}(C_i)$, which can be upper bounded by $\frac{\kappa_0\,b}{\mathsf{v}_d}$, under the assumption \eqref{eqn:hypo_thm}. Since the bound is independent of $r$, for any given $\kappa_0$, the running time complexity of generating a sample from $\mu_i$ is $O(1)$, for all $i$, using any standard perfect sampling algorithm; see Chapter~7 of \citet{MH16}. \\

Since $N_i \leq a$ for all $r$ and $i$, the total number of nodes $|V|$ of the dependency graph is of order $n(r)$ and the maximum degree $\Delta$ is uniformly bounded for all $r$. The lower bound in \eqref{eqn:hypo_thm} implies that $n(r)$ is of order $1/r^d$.\\

We first show that both $p$ and $q$ go to zero as $\kappa_0$ goes to zero, and then we prove $(i) - (iii)$ as a consequence of~\eqref{eqn:bdd_on_BE}. For any $\{i,j\} \in V$, occurrence of the event $B_{i,j}$ implies that both the cells $i$ and $j$ have non-empty configurations. Since $\mu_i \ll \rho_i$, from the definition of Gibbs process, the probability of cell $i$ has an empty configuration is $$\pp_{\mu^{\otimes}} \lt(\mX_{C_i} = \varnothing\rt) = \exp\lt(- \kappa \mathsf{Vol}(C_i)\rt)/\widetilde Z_i,$$ 
where $\widetilde Z_i = \ee_{\rho} \lt[ \exp\lt( - \mathcal{U}\lt(\mX_{C_i} \rt) \rt)\rt] = \ee_{\rho_i} \lt[ \exp\lt( - \mathcal{U}\lt(\mX \rt) \rt)\rt]$. 
Observe that for any ${\omega \in \Omega}$, if either ${\mX_{C_i} = \varnothing}$ or ${\mX_{C_j} = \varnothing}$ then $\omega \in B^c_{i,j}$. Hence, 
\begin{align}
\label{eqn:ub_p}
p  &\leq \max_{\{i,j \} \in V} \pp_{\mu^{\otimes}}\lt( \mX_{C_i} \neq \varnothing \text{ and  } \mX_{C_j} \neq \varnothing \rt) \nonumber\\
   &= \max_{\{i,j \} \in V} \lt[\pp_{\mu^{\otimes}}\lt( \mX_{C_i} \neq \varnothing\rt) \pp_{\mu^{\otimes}}\lt( \mX_{C_j} \neq \varnothing\rt)\rt]\nonumber\\
   &= \max_{\{i,j \} \in V} \lt[ \lt(1 -  \exp\lt(- \kappa \mathsf{Vol}(C_i)\rt)/\widetilde Z_i\rt) \lt(1 -  \exp\lt(- \kappa \mathsf{Vol}(C_j)\rt)/\widetilde Z_j\rt) \rt]\\
   &\leq \max_{\{i,j \} \in V} \lt[ \lt(1 -  \exp\lt(- \kappa \mathsf{Vol}(C_i)\rt)\rt) \lt(1 -  \exp\lt(- \kappa \mathsf{Vol}(C_j)\rt)\rt) \rt], \nonumber
\end{align}
where the last inequality holds because $\widetilde Z_i \leq 1$ for all $i$. Using \eqref{eqn:hypo_thm}, we write that
\begin{align*}
p &\leq \lt(1 -  \exp\lt(- \frac{\kappa_0\,b}{\mathsf{v}_d}\rt)\rt)^2.
\end{align*}
Therefore, $p$ goes to zero as $\kappa_0$ goes to zero.\\

Recall that $A_{u, v}$ is the event that the partial assignment on $\mathcal{I}(u) \cap \mathcal{I}(v)$ can be extended to make $B_u$ true. Also recall that if ${\{u,v\} \in E}$, there exists $i, j$ and $k$ such that $u = \{i,k\}$ and $v = \{j,k\}$. Thus,  $\mathcal{I}(u) \cap \mathcal{I}(v) = \{k\}$. This implies that the event $A_{v, u}$ can not occur if the common cell $k$ is empty. Thus, 
\begin{align}
\label{eqn:ub_q}
\pp_{\mu^\otimes}(A_{v, u}) &\leq  \lt(1 -  \exp\lt(- \kappa \mathsf{Vol}(C_k)\rt)/\widetilde Z_k\rt)\\
                                        &\leq  \lt(1 -  \exp\lt(- \kappa \mathsf{Vol}(C_k)\rt)\rt) \nonumber \\
                                        &\leq 1 - \exp\lt(- \frac{\kappa_0 \, b}{\mathsf{v}_d}\rt).\nonumber
\end{align}
As a consequence $q \leq 1 - \exp\lt(- \frac{\kappa_0 \,b}{\mathsf{v}_d}\rt)$, which goes to zero as $\kappa_0$ goes to zero. \\

Since the maximum degree $\Delta$ of the dependency graph  does not change with the value of $\kappa_0$, there exists a constant $\bar \kappa$ such that $6 \mathrm{e} p \Delta^2 \leq 1$ and $3 \mathrm{e} q \Delta \leq 1$ for all $\kappa_0 \leq \bar \kappa$. From \eqref{eqn:bdd_on_BE}, within an order of $\log |V|$ iterations the expected number of bad events is less than $1$. Since $|V|$ is $O(1/r^d)$, the expected number of iterations of the PRS algorithm is $O\lt(\log (1/r) \rt)$. This proves $(i)$.\\

Furthermore, since the number of random objects resampled at iteration $t$ of the algorithm is $|\mathsf{Res}_t|$, the expected running time of the algorithm is of order $\sum_{t = 0 }^\infty |\mathsf{Res}_t|$, which is less than $|V|/p$, by \eqref{eqn:bdd_on_BE}. Hence $(ii)$ is established. \\

In order to prove $(iii)$, it is enough to show using parallel computation that the expected complexity of constructing the resampling set in each iteration of the algorithm is $O(1)$. To show this, we suppose that starting from a bad event, using breadth-first search, we identify the resampling events associated with the bad event. This can be done in parallel starting from every bad event. Then the final resampling set is the union of all the resampling events identified. Note that for each bad event, we first add the boundary events of the bad event to the resampling set; the number of boundary events is at most $\Delta$. For each added event, on average at most $q\Delta$ events from its boundary events are added to the resampling set. This will go on until there are no more events to add. So, for each bad event, the number of resampling events added is bounded by 
$$\Delta[1  + q\Delta[1 +  q\Delta[1 + \cdots]]],$$ which is further bounded by $3 \mathrm{e} \Delta/(3 \mathrm{e} - 1)$ when $3 \mathrm{e} q\Delta \leq 1$. Since in each iteration, the resampling sets associated with the bad events are constructed in parallel, the expected running time complexity of constructing the final resampling set in each iteration is $O(1)$.
\end{proof}

\subsection{Comparison with existing well-known methods}
In this subsection, we consider two well-known methods, namely, the naive rejection sampling and the dCFTP methods, and establish a trivial lower bounds on their expected running time complexity. \\

\noindent
{\bf Naive rejection method:} For a Gibbs process of the form \eqref{eqn:abspos}, the naive rejection sampling method repeatedly simulates a sample $\mX$ from $\rho$ until it is accepted. The last sample has the target distribution $\mu$. The acceptance probability at each iteration is $Z = \ee_\rho\lt[\exp(- \mathcal{U}(\mX))\rt]$, and thus the expected number of iterations is  $1/Z$. Since the expected number of points generated in each iteration is $\kappa$ (because $\rho$ is $\kappa$-homogeneous), the expected running time of the naive algorithm is proportional to $\kappa/Z$. Below, we use a standard argument to show that $\kappa/Z$ increases faster than an exponential function as $r$ decreases to $0$.\\

Consider the cubic grid partitioning of Subsection~\ref{sec:cubic_grid}. Note that each cell is at most as big as a cube with the edge length $2r$ and the number of cells $n(r)$ is at least $\lceil 1/2r \rceil$. Therefore, by ignoring the cross correlations between the cells and using the fact that there are at least $(n(r) - 2)^d$ cubic cells, we obtain 
$$Z \leq \prod_{i =1}^{(n(r) - 2)^d} \ee_{\rho}[\exp(-\mathcal{U}(\mX_{C_i}))] \leq \lt(\ee_{\rho}[\exp(-\mathcal{U}(\mX_{C}))] \rt)^{\lt(\frac{1 - 4r}{2r}\rt)^d} =: \varepsilon^{\lt(\frac{1 - 4r}{2r}\rt)^d},$$
where ${C = [0,2r]^d}$, ${\varepsilon = \ee_{\rho}[\exp(-\mathcal{U}(\mX_{C}))]}$, and the inequalities hold because ${n(r) \geq 1/2r}$ and ${\varepsilon \leq 1}$.\\

Since for any fixed $\kappa_0 >0 $, the value of $\varepsilon$ is strictly less than $1$ and does not depend on~$r$ (because, $\varepsilon$ is the same if $C = [0,1]^d$, $\rho$ is the distribution of $(2^d \kappa_0/\mathsf{v}_d )$-homogeneous PPP on $C$, and the interaction range of the potential function $\mathcal{U}$ is $1$). By using the value of $\kappa$ and the upper bound on $Z$ we obtain a lower bound on the expected running time complexity $\kappa/Z$, given by
\[
\frac{\kappa}{Z} \geq \lt(\frac{\kappa_0}{\mathsf{v}_d r^d} \rt)\lt(\frac{1}{\varepsilon}\rt)^{\lt(\frac{1 - 4r}{2r}\rt)^d},
\]
which increases faster than an exponential function, as $r$ goes to $0$.
\\

\noindent
{\bf Dominated CFTP:} In order to establish a lower bound on the expected running time of a dominated CFTP method, we first briefly state the general description the method (for a detailed description, we refer the reader to, for example, \citet{KM00}). Let $\mD = \{\mD(t): t \in \reals\}$ be the (free) birth-and-death process on $S = [0,1]^d$ with birth rate $\kappa$, where each birth is a (marked) point uniformly and independently selected on $S$ and alive for a random time exponentially distributed with mean one. It is not difficult to show that the steady-state distribution of $\mD$ is $\rho$. Since the target Gibbs distribution $\mu \ll \rho$, using coupling, it is possible to construct a process $\mZ = \{\mZ(t) : t \in \reals\}$ such that $\mZ(t) \subseteq \mD(t)$ for all $t \in \reals$ and the steady-state distribution of $\mZ$ is $\mu$. Any dCFTP method consists of two steps: i) constructing the dominating spatial birth-and-death process 
$\{\mD(t): - s \leq t \leq 0\}$ backward in time, for some $s > 0$, starting at time zero with $\mD(0) \sim \rho$, and ii) use {\em thinning} on the dominating process to obtain an upper bounding process $\{\mU_s(t) : t \geq -s \}$ with $\mU_s(-s) = \mD(-s)$ and a lower bounding process $\{\mL_s(t) : t \geq -s\}$ with $\mL_s(-s) = \varnothing$, forward in time such that the condition $\mL_s(t)\subseteq \mZ(t) \subseteq \mU_s(t) \subseteq \mD(t)$ is guarantee to hold for all $t \geq -s$. If $\mU_s$ and $\mL_s$ {\em coalescence} at time $0$, that is, $\mU_s(0) = \mL_s(0)$, then  $\mU_s(0)$ is a perfect sample from the target distribution $\mu$. If there is no coalescence, then in the next iteration, increase~$s$ and extend the dominating process further backward to time $-s$ and repeat the same procedure. \\

The criteria for thinning depends on the definition of the target distribution. However, the dominating process depends only on $\rho$. 
Let $\widetilde T$ be the {\em backward coalescence} time given by $$\widetilde T := \min\{s \geq 0 : \mL_s(0) = \mU_s(0)\}.$$
The running time complexity of a dCFTP method is at least of order of the number of computations  needed to construct the dominating process $\{\mD(t) : -\widetilde T \leq t \leq 0\}$. 
Let $$T_s = \{ {t \geq 0}: \text{ none of the points in } {\mD(-s)} \text{ are alive at } {-s + t}\}.$$ 
Since the dominating process $\mD$ is time-reversible and $\mD(0) \sim \rho$, we have $\mD(-s) \sim \rho$ for all $s \geq 0$.
Hence, the distribution of $T_s$ does not depend on $s$.\\

Since $\mU_s(-s) = \mD(-s)$ and $\mL_s(-s) = \varnothing$, if a point of $\mD(-s)$ is alive at time $0$, then $\mU_s(0) \neq \mL_s(0)$. Therefore, $T_s \geq s$ implies that $\widetilde T \geq s$ and thus $s$ is at least $T_s$ for the coalescence to happen. As a consequence, the expected running time complexity of any dCFTP algorithm is at least of order of the expected number of births in $\mD$ that are generated during an interval of length $\ee[T_s]$, which is $\kappa\, \ee[T_s]$ because the births in the dominating process are Poisson with rate $\kappa$. \\

Further recall that each birth is alive for a random time independently and exponentially distributed with mean one. Conditioned on $|\mD(-s)| = m$,  $T_s$ is the maximum of $m$ $i.i.d.$ mean one exponential random variables. Since $|\mD(-s)| \sim \mathsf{Poi}(\kappa)$ for all $s$, we have
\begin{align*}
\ee[T_s] = \sum_{m = 0}^\infty \mathsf{e}^{- \kappa} \frac{\kappa^m}{m!} \ee\lt[T_s \big| |\mD(-s)| = m\rt] \geq \sum_{m \geq \kappa/2} \mathsf{e}^{- \kappa} \frac{\kappa^m}{m!} \ee\lt[T_s \big| |\mD(-s)| = m\rt] = \sum_{m \geq \kappa/2}^\infty \mathsf{e}^{- \kappa} \frac{\kappa^m}{m!} H(m),
\end{align*}
where ${H(m) = \sum_{i = 1}^m \frac{1}{i}}$ is the $m^{th}$ harmonic number. Using the fact that ${H(m) \geq \log m}$ for all ${m \geq 1}$, 
\begin{align*}
\ee[T_s] \geq \log(\kappa/2) \pp\lt(|\mD(-s)| \geq \kappa/2\rt).
\end{align*}
From Chernoff bound on the tail probabilities of Poisson distribution, there exists a constant $a > 0$ such that $\pp\lt(|\mD(-s)| \geq \kappa/2\rt) = 1 - \exp\lt( - a\kappa\rt)$, for all values of $\kappa$. In conclusion, the expected running time complexity of any dCFTP algorithm is at least of order of $ \kappa \log \kappa$, which is of order of $\frac{1}{r^d} \log\lt(\frac{1}{r} \rt)$ for any $\kappa_0$. 

\section{Simulations}
\label{sec:sim}
In this section, we take $S = [0,1]^2$ and apply Algorithm~\ref{alg:PRS_GPP} to generate perfect samples of hard-core process, Strauss process and PSM models. We ignore the case of area-interaction process as the implementation is similar to that of PSM model and expected to have same order of complexity. \\

We estimate the expected number of iterations of the algorithm for different values of the model parameters. As long as the samples on each cell are perfect, the reported results are the same for any choice of existing method to generate samples from $\mu_i$'s. \\

\noindent
{\bf Hard-core and Strauss processes:} Consider the Strauss process with intensity $\kappa = \kappa_0/(\mathsf{v}_d r^d)$. Recall that the hard-core process is a Strauss process with ${\gamma = 0}$. Panels (a), (b) and (c) in Figure~\ref{fig:sim_strauss} are correspond to $\kappa_0$ is equal to $0.1$, $0.2$ and $0.25$, respectively. Each panel has two curves corresponds to $\gamma = 0$ (i.e., hard-core process) and $\gamma = 0.5$. Each curve is the estimated expected number of iterations of the algorithm as a function of $K$, when the interaction range $2r = 1/K$. Perfect samples from $\mu_i$ on each cell $i$ are generated using the dCFTP method by \citet{MH12}.
Observe that in each case, the expected number of iterations of the algorithm seems to be $O\lt(\log(1/r)\rt)$ as shown in Theorem~\ref{thm:complex}. These results suggest that $\bar \kappa$ in Theorem~\ref{thm:complex} can be at least $0.25$. 

\begin{figure*}[h!]
    \centering
    \begin{subfigure}[t]{0.32\textwidth}
        \centering
        \includegraphics[height=0.8\textwidth]{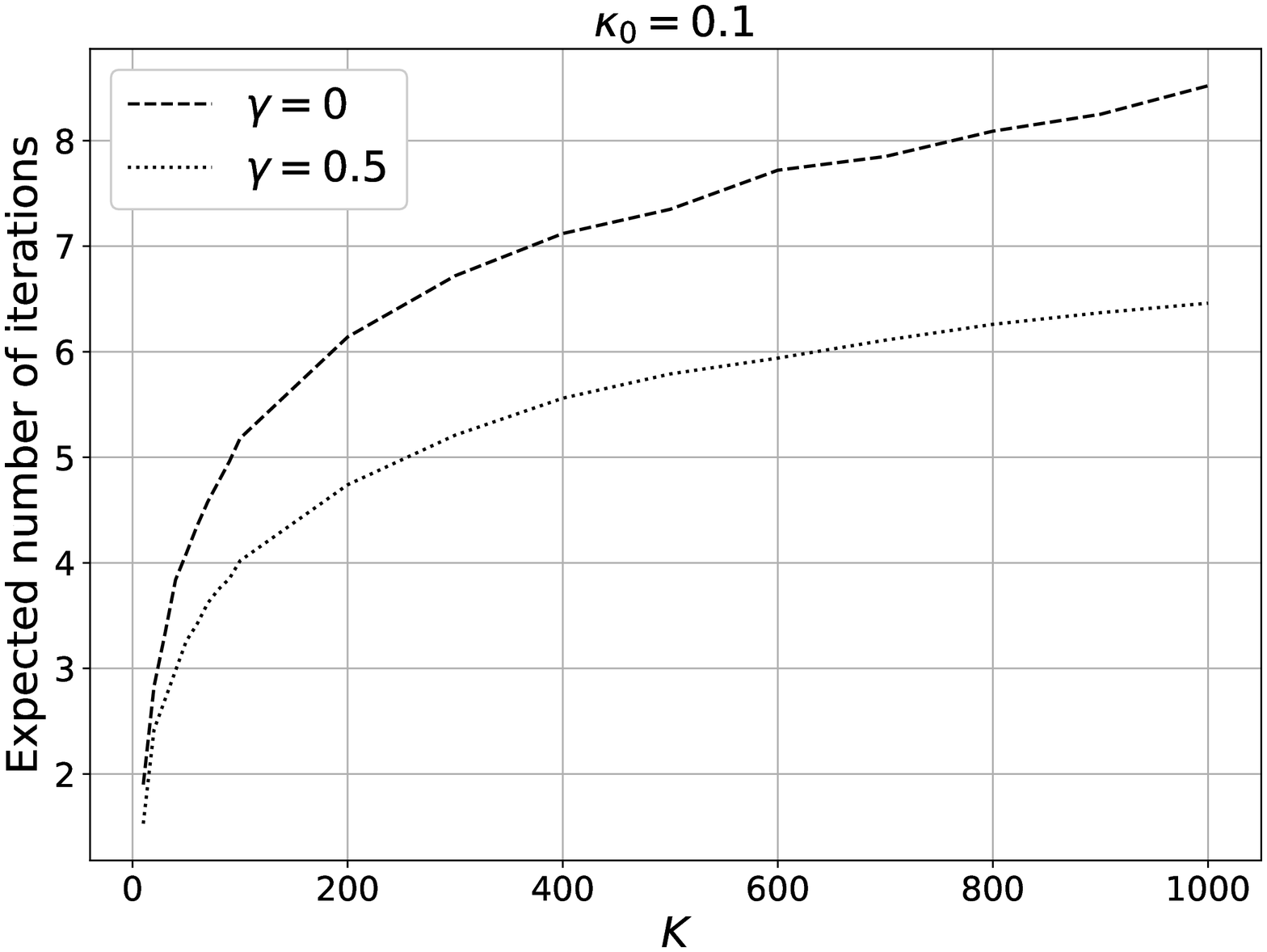}
        \caption{}
    \end{subfigure}%
  ~
    \begin{subfigure}[t]{0.32\textwidth}
        \centering
        \includegraphics[height=0.8\textwidth]{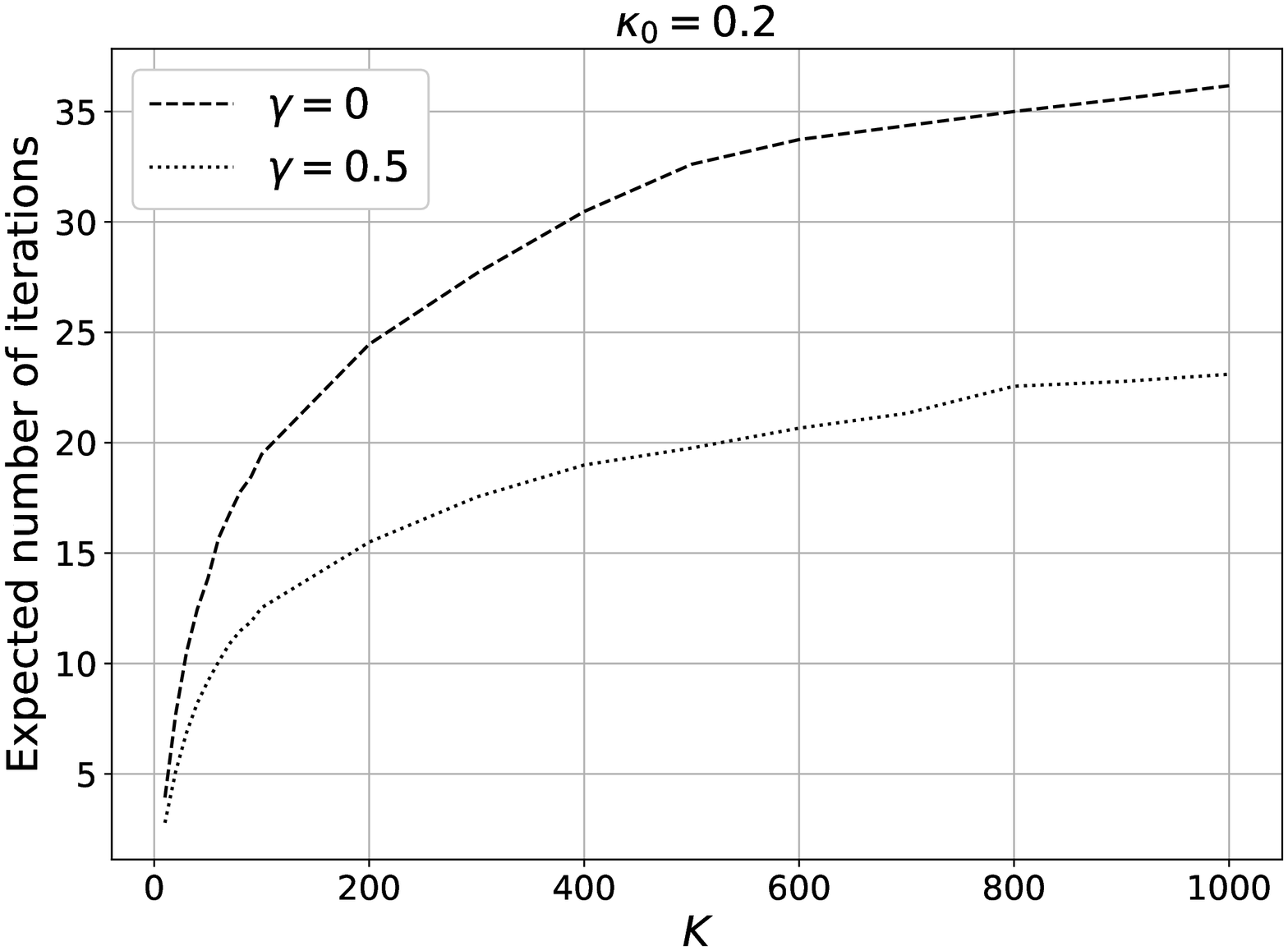}
        \caption{}        
    \end{subfigure}%
  ~
     \begin{subfigure}[t]{0.32\textwidth}
        \centering
        \includegraphics[height=0.8\textwidth]{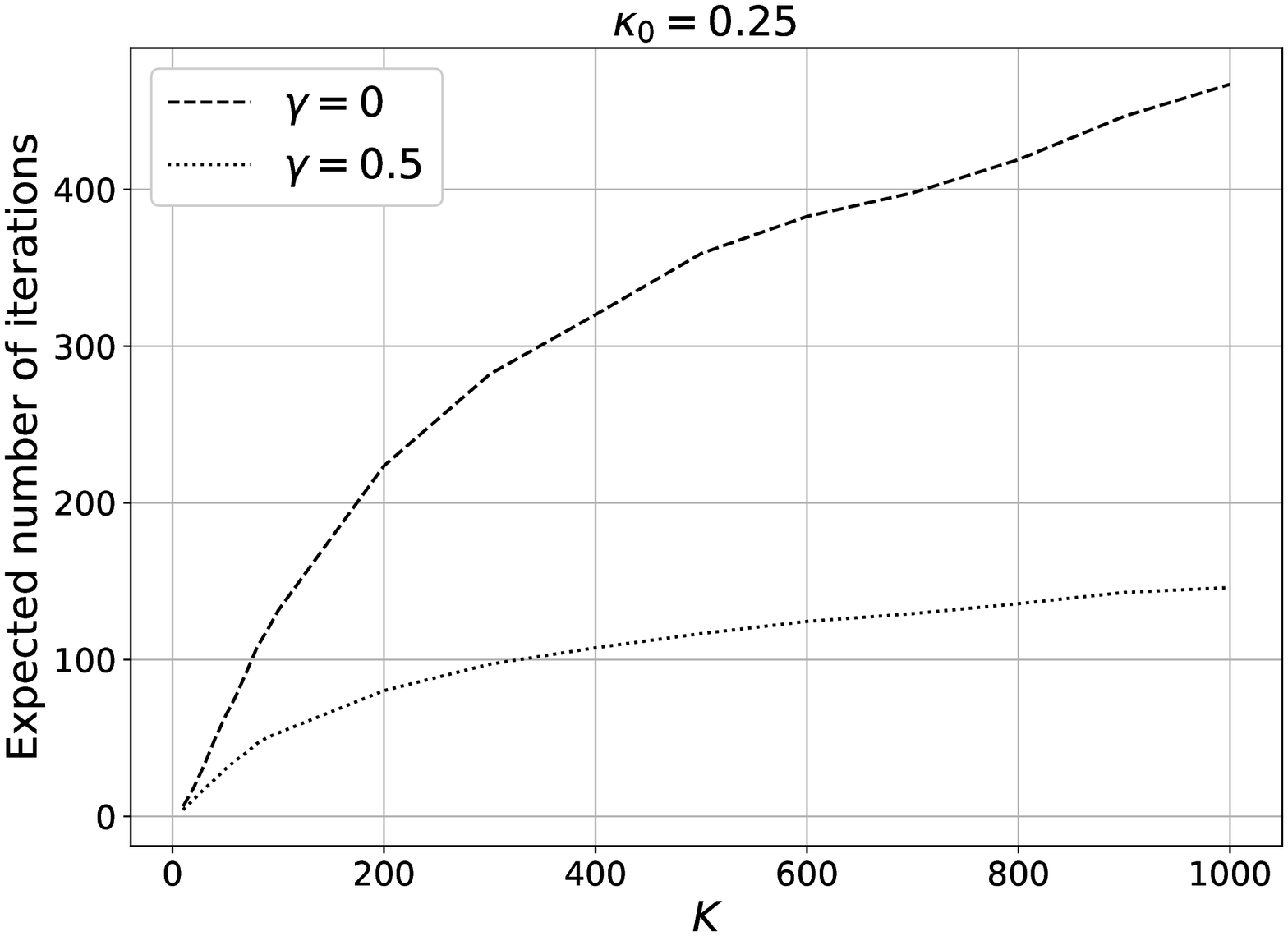}
        \caption{}
    \end{subfigure}%
    \caption{\footnotesize Expected number of iterations of Algorithm~\ref{alg:PRS_GPP} as a function of $K$ for Strauss process with the interaction range $2r = 1/K$.}
    \label{fig:sim_strauss}
\end{figure*}
{Figure~\ref{fig:sim_GJ_vs_Our} corresponds a hard-core process with the interaction range $2r = 0.01$, and it compares the expected number of iterations of the new algorithm with that of the method proposed by \citet{Gj18}, for different values of $\kappa_0$. The complexity of Algorithm~\ref{alg:PRS_GPP} is slightly higher than that of \citet{Gj18}. However, as mentioned earlier, the algorithm of \citet{Gj18} is restricted to hard-core processes, where as the new algorithm can be applied to more general Gibbs processes.}\\
\ \\
\vspace{-5mm}
\begin{figure*}[h!]
    \centering
    \includegraphics[height=0.345\textwidth]{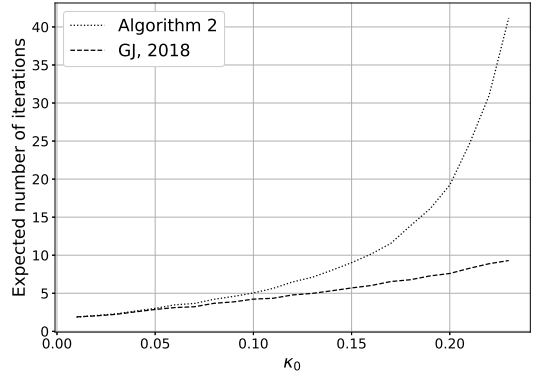}
    \caption{\footnotesize Comparison of expected number of iterations of Algorithm~\ref{alg:PRS_GPP} and the algorithm of \citet{Gj18} for a hard-core model with the interaction range $2r = 0.01$. }
    \label{fig:sim_GJ_vs_Our}
\end{figure*}

\noindent
{\bf PSM model:}
Consider the PSM model with the interaction range $2r = 1/K$ for some $K \geq 1$. We take the intensities of type-$1$ and type-$2$ points to be $\displaystyle \kappa^{(1)}_{0}/(\pi r^2)$ and $\displaystyle \kappa^{(2)}_{0}/(\pi r^2)$, respectively. Figure~\ref{fig:sim_PSM} plots the estimated expected number of iterations of the algorithm as a function of $K$. Perfect samples from $\mu_i$ on each cell $i$ are generated using the dCFTP method by \citet{KW98}. Again, we see that the expected number of iterations of the algorithm seems to be $O\lt(\log(1/r)\rt)$ for small values of $\kappa_0 = \kappa^{(1)}_{0} + \kappa^{(2)}_{0}$.

\begin{figure*}[h!]
    \centering
    \includegraphics[height=0.345\textwidth]{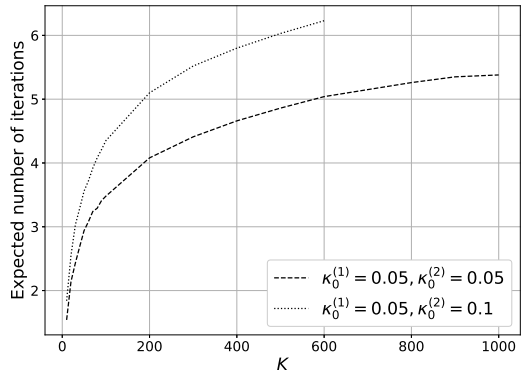}
    \caption{\footnotesize Expected number of iterations of Algorithm~\ref{alg:PRS_GPP} for PSM model.}
    \label{fig:sim_PSM}
\end{figure*}
\section{Conclusion} 
\label{sec:con}
In this paper, we considered the problem of perfect sampling for Gibbs point processes with a finite interaction range $2r$, defined on $S\subseteq \reals^d$. We proposed a new perfect sampling algorithm by combining the existing perfect sampling methods and the partial rejection sampling proposed by \citet{GJL17}. For pairwise interaction processes, penetrable spheres mixture models, and area-interaction processes that are absolutely continuous with respect to a $\kappa$-homogeneous Poisson point process on $S = [0,1]^d$, we showed that if $\kappa = \kappa_0/(\mathsf{v}_d r^d)$, the proposed algorithm can be implemented with the expected running time complexity of $O(\log(1/r))$ as $r$ goes to $0$, for sufficiently small values of $\kappa_0$. We illustrated our findings using several simulation results. From these simulations, we notice that the value of $\kappa_0$ can be at least $0.25$ for Strauss processes. However, at this stage, we do not have a theoretical justification to support this claim, and we would like to address this in future research. 

\section*{Acknowledgements}
This work has been supported by the Australian Research Council Centre of Excellence for Mathematical and Statistical Frontiers (ACEMS), under grant number CE140100049. 
We would like to thank Michel Mandjes for bringing the partial rejection sampling to our attention.

\bibliographystyle{apalike}
\bibliography{Ref}

\end{document}